\newtheorem{thm}{Theorem}[section]
\theoremstyle{definition}
\newcommand{\scr}[1]{\mathscr #1}
\definecolor{wco}{rgb}{0.5,0.2,0.3}
\numberwithin{equation}{section} \theoremstyle{remark}
\newcommand{\ua}{\uparrow}
\newcommand{\Pp}{\mathbb P}
\newcommand{\nnu}{\mathsf{n}}
\title{{\bf Coupling and Strong Feller  for Jump Processes on Banach Spaces}\footnote{Supported in
 part by NNSFC(11131003, 11126350 and 11201073), SRFDP, the Fundamental Research Funds for the Central Universities and the Programme of Excellent Young Talents in Universities
of Fujian (JA10058 and JA11051).}
}
\author{
{\bf Feng-Yu Wang$^{a),c)}$ and Jian Wang$^{b)}$}\\
\footnotesize{$^{a)}$School of Mathematical Sciences,
Beijing Normal University, Beijing 100875, China}\\
\footnotesize{$^{b)}$School of Mathematics and Computer Science, Fujian Normal
University, Fuzhou 350007, China.}\\ \footnotesize{$^{c)}$Department of Mathematics,
Swansea University, Singleton Park, SA2 8PP, UK}\\
\footnotesize{Email: wangfy@bnu.edu.cn; F.Y.Wang@swansea.ac.uk; jianwang@fjnu.edu.cn}}
\begin{document}
\def\R{\mathbb R}  \def\ff{\frac} \def\ss{\sqrt} \def\B{\mathbf
B}
\def\N{\mathbb N} \def\kk{\kappa} \def\m{{\bf m}}
\def\dd{\delta} \def\DD{\Delta} \def\vv{\varepsilon} \def\rr{\rho}
\def\<{\langle} \def\>{\rangle} \def\GG{\Gamma} \def\gg{\gamma}
  \def\nn{\nabla} \def\pp{\partial} \def\EE{\scr E}
\def\d{\text{\rm{d}}} \def\bb{\beta} \def\aa{\alpha} \def\D{\scr D}
  \def\si{\sigma} \def\ess{\text{\rm{ess}}}
\def\beg{\begin} \def\beq{\begin{equation}}  \def\F{\scr F}
\def\Ric{\text{\rm{Ric}}} \def\Hess{\text{\rm{Hess}}}
\def\e{\text{\rm{e}}} \def\ua{\underline a} \def\OO{\Omega}  \def\oo{\omega}
 \def\tt{\tilde} \def\Ric{\text{\rm{Ric}}}
\def\cut{\text{\rm{cut}}} \def\P{\mathbb P} \def\ifn{I_n(f^{\bigotimes n})}
\def\C{\scr C}      \def\aaa{\mathbf{r}}     \def\r{r}
\def\gap{\text{\rm{gap}}} \def\prr{\pi_{{\bf m},\varrho}}  \def\r{\mathbf r}
\def\Z{\mathbb Z} \def\vrr{\varrho} \def\ll{\lambda}
\def\L{\scr L}\def\Tt{\tt} \def\TT{\tt}\def\II{\mathbb I}
\def\i{{\rm in}}\def\Sect{{\rm Sect}}\def\E{\mathbb E} \def\H{\mathbb H}
\def\M{\scr M}\def\Q{\mathbb Q} \def\texto{\text{o}} \def\LL{\Lambda}
\def\Rank{{\rm Rank}} \def\B{\scr B} \def\i{{\rm i}} \def\HR{\hat{\R}^d}
\def\BB{\mathbb B}\def\vp{\varphi}

\maketitle
\begin{abstract} By using lower bound conditions of the L\'evy measure w.r.t. a nice reference measure,
the coupling and strong Feller properties   are investigated for the Markov semigroup associated with  a class of linear SDEs driven by (non-cylindrical) L\'evy
processes on a Banach space. Unlike in the finite-dimensional case where   these properties have also been confirmed for L\'evy processes  without drift,
in the infinite-dimensional setting the appearance of a drift term is essential   to ensure the quasi-invariance of the process
 by   shifting the initial data. Gradient estimates and exponential convergence are also investigated. The main results are illustrated by specific models on
the Wiener space and  separable Hilbert spaces.
\end{abstract} \noindent

 AMS subject Classification:\ 60J75, 60J45.   \\
\noindent
 Keywords: Coupling, Strong Feller, L\'evy process, Wiener space.
 \vskip 2cm

\section{Introduction}

In recent years,   the coupling property, the strong Feller property, and gradient estimates
have been intensively investigated for linear stochastic differential equations driven by L\'evy processes on $\R^d$, see e.g. \cite{PZ, W10a, W10b, SW, BSW, SSW, SW2, KS2, KS} and references within. In these references the shift-invariance of the Lebesgue measure plays an essential role.  When the state space is  infinite-dimensional so that the Lebesgue measure is no longer available,   we need a reference measure which is quasi-invariant under a reasonable class of shift transforms. Typical
examples of the reference measure include    the Wiener measure on the continuous path space and the Gaussian measure on a  Hilbert space, see Section 5 for details. The purpose of this paper is to investigate regularity properties of linear SDEs
driven by L\'evy processes on a Banach space equipped with such a nice reference measure. To ensure the quasi-invariance of the solution,
a  strong enough linear drift term will be needed.

On the other hand, concerning (semi-)linear SDEs on Hilbert spaces, when the noise is a cylindrical $\aa$-stable process, many regularity results derived in finite dimensions can be extended to the infinite-dimensional setting (see \cite{X1,X2,WJ}); and  when the noise has  a non-trivial Gaussian part, the regularity properties can be  derived by using the drift part and the Gaussian part (see e.g. \cite{Z, DZ, BN, RW03}).   But there seems to be no    results concerning the strong Feller    and   coupling properties  for   SDEs   driven by purely jump non-cylindrical L\'evy processes. In this paper we intend to investigate these properties for linear SDEs driven by non-cylindrical L\'evy noise on Banach spaces.

Let $(\BB, \|\cdot\|_\BB)$ be a Banach space and let $\mu$ be a probability measure on $\BB$ having full support. Let $\BB'$ be the dual space of $\BB$ with $\langle \cdot, \cdot\rangle$  the duality between $\BB$ and $\BB'$. Let $(\H,\|\cdot\|_\H)$ be another Banach space which is densely and continuously embedded into  $ \BB$ such that for any $h\in\H$, $\mu$ is quasi-invariant under the shift $x\mapsto x+h$; that is, there exists a non-negative measurable function $\vp_h$ on $\BB$ such that
\beq\label{1.1} \mu(\d z-h)= \vp_h(z)\,\mu(\d z).\end{equation} Let $L_t$ be a L\'evy process on $\BB$ with L\'evy measure $\nu$.   Recall that a $\si$-finite measure $\nu$ on $\BB$ is called a L\'evy measure if $ \nu (\{0\})=0$ and the mapping from $\BB'$ to $\R$ given by
$$\BB'\ni a\mapsto \exp\bigg[\int_\BB\big(\cos\,\<x,a\>-1\big)\, \nu  (\d x)\bigg]$$ is the characteristic function of a random variable on $\BB$. Note that
since $\cos$ is an even function, one may replace $\nu$ by the symmetric measure $\nu+\nu^*$ as  in \cite{App1}, where $\nu^{*}(A)=\nu(-A)$ for any $A\in\B$.  When $\BB$ is a Hilbert space,  $\nu$ is a L\'evy measure if and only if  $\nu(\{0\})=0$ and  $\int_\BB (1\land\|x\|_\BB^2)\,\nu(\d x)<\infty$; while in general, $\nu$ is a L\'evy measure provided  $\nu(\{0\})=0$ and  $\int_\BB (1\land\|x\|_\BB )\,\nu(\d x)<\infty$ (see \cite{App0,App1}).

 Let $\si: \BB\to\BB$ be a bounded linear operator  and let $(A,\D(A))$ be a  linear  operator on $\BB$ generating a $C_0$ semigroup $(T_s)_{s\ge 0}$.
 Consider the following linear SDE on $\BB$:
 \beq\label{E1} \d X_t= AX_t\,\d t + \si \,\d L_t.\end{equation} For any $x\in \BB$, the solution with initial data $x$ is
\beq\label{S} X_t^x= T_t x +\int_0^t T_{t-s} \si\,\d L_s,\ \ \ t\ge 0.\end{equation} See \cite{CM, PZ1, App0, App1} for the detailed construction of this solution.
 Let $\B_b(\BB)$ be the class of all bounded measurable functions on $\BB$. We aim to investigate the coupling property and the strong Feller property for the associated Markov semigroup
 $$P_t f(x):= \E f(X_t^x),\ \ \ t\ge 0, x\in \BB, f\in \B_b(\BB).$$
 Recall that the solution has successful coupling if and only if  (cf. \cite{Lin,CG})
 $$\lim_{t\to \infty} \|P_t(x,\cdot)- P_t(y,\cdot)\|_{var}=0,\ \ \ x,y\in\BB,$$ where $P_t(x,\d y)$ is the transition kernel of $P_t$ and $\|\cdot\|_{var}$ is the total variation norm. Let $\rr_0$ be a non-trivial non-negative measurable function   on $\BB$ such that
\beq\label{C} \nu(\d z)\ge \rr_0(z)\,\mu(\d z)=:\nu_0(\d z)\end{equation} holds. Thus, the L\'evy process considered here is essentially different from the cylindrical $\aa$-stable process used in \cite{X1,X2}. Indeed, for $\BB$ being a  Hilbert space with ONB $\{e_i\}_{i\ge 1}$, the L\'evy measure (if exists) for a cylindrical L\'evy process is supported on $\cup_{i\ge 1} \R e_i$ and hence, is singular w.r.t. e.g. a non-trivial Gaussian probability measure $\mu$.    Assume
 \paragraph{(A)}  {\rm Ker}$(\si)=\{0\}$ and $T_s \BB\subset \si \H$ holds for any $s>0.$

 \

 Obviously, {\bf(A)} implies that for any $s>0$,  the operator  $\si^{-1}T_s: \BB\to\H$ is well defined.

 \beg{thm}\label{T1.1} Assume {\bf (A)}. Suppose that $\nu_0$ in $(\ref{C})$ is infinite; i.e.\ $\nu_0(\BB)=\infty.$  \beg{enumerate} \item[$(1)$]  If for any $h\in\H$
\beq\label{N}\sup_{\vv\in (0,1)} \vp_{\vv h}(\cdot+\vv h)<\infty,\ \  \mu\text{-a.e.},\end{equation} then for any $f\in \B_b(\BB)$ and $t>0$, $P_tf$ is directionally continuous; i.e.
$\lim_{\vv\to 0} P_t f(x+\vv y)= P_t f(x)$ holds for any $x,y\in\BB.$
\item[$(2)$] If for any $s>0$
\beq\label{1.5'}\sup_{\|y\|_\BB\le 1} \vp_{\si^{-1}T_s y}(\cdot+\si^{-1}T_s y)<\infty,\ \  \mu\text{-a.e.},\end{equation}
then $P_t$ is strong Feller for $t>0$; i.e. $P_t \B_b(\BB)\subset C_b(\BB).$ \end{enumerate} \end{thm}

A simple example for $\nu_0(\BB)=\infty$ to hold is as follows. Let $z\to \|z\|_\BB$ have a strictly positive distribution density function $\rr$ under the probability measure $\mu$, for instance it is the case when $\mu$  is the Wiener measure (see Subsection 5.1 below).   Let $r_0\in (0,\infty]$, and let $\aa\in (0,2)$ when $\BB$ is a Hilbert space and $\aa\in (0,1)$ otherwise. Then
$$\nu_0(\d z):= \ff{1_{(0,r_0)}(\|z\|_\BB)}{\rr(\|z\|_\BB)\|z\|^{1+\aa}_\BB}\,\mu(\d z)$$ is a L\'evy measure on $\BB$ with $\nu_0(\BB)=\infty.$
This measure is an infinite-dimensional version of the $\aa$-stable jump measure. Modifying arguments from   \cite[Theorem 3.1]{W10a} and \cite[Theorem 1.1]{SW2} where the coupling property has been investigated in the finite-dimension setting, we have the following two assertions on the coupling property with estimates on the convergence rate.
For $r>0$ and $z\in\BB$, let  $B(z,r)=\{y\in \BB: \|z-y\|_\BB<r\}$ be the open ball at $z$ with radius $r$.

 \beg{thm}\label{T1.2}  Assume {\bf (A)}. Suppose that    $\nu_0$   in $(\ref{C})$ is finite; i.e.\ $\nu_0(\BB)<\infty$, $\si$ is invertible with $\|\si^{-1}\|_\BB<\infty$, and   $\|T_s\|_{\BB}\le c$  holds for some constant $c>0$ and all $s>0.$
\begin{itemize}
\item[{\rm (i)}]  If there exist $z_0\in\BB$ and $r_0>0$ such that
 \beq\label{C2} \dd_1(\vv):=\sup_{s\ge \vv, \|x\|_\BB\le 1} \int_{B(z_0, r_0)} \ff{\vp_{\si^{-1}T_s x}(z)^2\rr_0(z-\si^{-1}T_sx)^2}{ \rr_0(z) }\,\mu(\d z)<\infty,\ \ \vv>0,\end{equation} then there exists a constant $C>0$   such that
 \beq\label{Coupling} \|P_t(x,\cdot)-P_t(x+y,\cdot)\|_{var} \le C (1+\|y\|_\BB) \inf_{\vv\in (0,1)} \bigg(\vv +\sqrt{\ff{ {\dd_1(\vv)}}{ t}}\bigg),\ \ t>0,\ x,y\in\BB \end{equation}  holds.

 \item[{\rm (ii)}]If there exist $z_0\in \BB$ and $r_0>0$ such that  \beq\label{C22}\dd_2(\vv):=\sup_{s\ge \vv, \|x\|_{\BB}\le 1} \int_{B(z_0, r_0)} \frac{\varphi_{\sigma^{-1}T_s x}(z)^2\vee 1}{\rho_0(z)}\,\mu(dz)<\infty,\ \ \vv>0,\end{equation} then there exist two constants $C>0 $   such that for all $x,y\in\BB$ and $t>0$,
\begin{equation}\label{Coupling1}\begin{aligned}
   \|P_t(x,\cdot)-P_t(y,\cdot)\|_{var}\le
  C (1+\|x-y\|_{\BB})\inf_{\varepsilon\in (0,1)}\!\!\bigg(\varepsilon+  \sqrt{\frac{{\dd_2(\vv)}}{{t}}} \bigg).\end{aligned}
\end{equation}

 \end{itemize}
 \end{thm}
Using $\rr_0\land 1$ in place of $\rr_0$, one may   replace \eqref{C2} by
$$\tt\dd_1(\vv):=\sup_{s\ge \vv, \|x\|_\BB\le 1} \int_{B(z_0, r_0)} \ff{\vp_{\si^{-1}T_s x}(z)^2}{1\land \rr_0(z)}\,\mu(\d z)<\infty,\ \ \vv>0.$$
If $\inf_{z\in B(x_0,r_0)}\rho_0(z)>0$, then this condition and \eqref{C22} are equivalent. But in general (\ref{C2}) and (\ref{C22}) are incomparable. Next, it is easy to see  that the convergence rate implied by (\ref{Coupling}) or \eqref{Coupling1} is in general slower than $\ff 1 {\ss t}$. Our next result shows that if $\vp$ and $\rr_0$ are regular enough,   the convergence could be exponentially fast.

\beg{thm}\label{T1.3}  Assume {\bf (A)}. Suppose that $\nu_0$ in $(\ref{C})$ is finite with $\ll_0:=\nu_0(\BB)\in (0,\infty)$, $\|T_s\|_\BB\le c\e^{-\ll s}$   and
\beq\label{Z1} \int_{\BB} \Big(|\rr_0(z)-\rr_0(z+h)|+\rr_0(z)|\vp_h(z)-1|\Big)\,\mu(\d z)\le c\|h\|_\H,\ \ \|h\|_\H\le 1\end{equation} holds  for some constants $c,\ll>0$ and all $s\ge 0$. If
\beq\label{Z2} \sup_{t\ge 1}\ff 1 {1-\e^{-\ll_0 t}} \int_0^t\e^{-\ll_0 r} \Big(\sup_{\|z\|_\BB\le 1}\sup_{s\ge r}\|\si^{-1}T_sz\|_\H\Big)\,\d r <\infty,\end{equation} then there exists a constant $C>0$ such that
\beq\label{Z3} \|P_t(x,\cdot)-P_t(y,\cdot)\|_{var} \le C (1+\|x-y\|_\BB)\e^{-\ff{\ll_0\ll t}{\ll_0+\ll}},\ \ x,y\in \BB, t\ge 0.\end{equation} \end{thm}

Following the line of \cite[Section 3]{W11}, one may  also naturally investigate gradient estimates and derivative formula for $P_t$. It is not difficult to present a formal result under a condition similar to \cite[(3.1)]{W11}, for instance:
\beg{prp}\label{PP0} Assume that $\{h\in\H: \sup_{s\in [0,1]}\|\si^{-1}T_sh\|_\H<\infty\}$ is dense in $\BB$. If there exists a non-negative function $g$ on $\BB$ such that $ \nu_0(\{g>0\})=\infty$, $\rr_0g$ is bounded and Lipschitz continuous in $\|\cdot\|_\H$, and
  \beq\label{T1-2} \beg{split} q(t)&:=\sup_{\|h\|_\H\in (0,1]}\bigg\{\Big(1+ \ff{\mu(|\vp_h-1|)}  {\|h\|_\H}\Big)\int_0^\infty \e^{-t\nu_0(1-\exp[-rg])}\,\d r \\
  &\qquad\qquad\qquad \quad+ \ff{\mu\big(|g-g(\cdot-h)|\big)}  {\|h\|_\H}\int_0^\infty r\e^{-t\nu_0(1-\exp[-rg])}\,\d r\bigg\} <\infty,\ \ t>0,\end{split}\end{equation}
 then there exists a constant $C_1>0$ such that \beg{equation*}\beg{split} |\nn_y P_t f(x)|:&=\,\limsup_{\vv\downarrow 0} \ff 1\vv |P_tf(x+\vv y)-P_tf(x)|\\
 &\le \,C_1\|f\|_\infty q(t) \int_0^t \|\si^{-1}T_sy\|_\H\,\d s,\ \ f\in\B_b(\BB), t>0, x,y\in \BB.\end{split}\end{equation*}
  Suppose moreover that $\|T_s\|_\BB\le c\e^{-\ll s}$ for some constants $c,\ll>0$ and all $s\ge 0$. Then
 $$\|P_t(x,\cdot)-P_t(y,\cdot)\|_{var} \le C_2 (1+\|x-y\|_\BB)\e^{- \ll t },\ \ x,y\in \BB, t\ge 0$$ holds for some constant $C_2>0.$
 \end{prp}

Unfortunately, in the moment  we do not have any non-trivial example in infinite dimensions to illustrate condition (\ref{T1-2}). Indeed, it seems that in infinite dimensions  the uniform norm of the gradient of $P_t$
$$\|\nn P_t\|_\infty:= \sup\{|\nn_y P_t f(x)|:\ \|y\|_\BB\le 1, x\in \BB, \|f\|_\infty\le 1\}$$ is  most likely infinite for any $t>0.$ The intuition is that comparing with   a cylindrical noise given in \cite[Assumption 2.2]{X2}, which is strong enough along   single directions so that the noise might not take values in $\BB$,  our non-cylindrical L\'evy process   seems too weak to imply a bounded gradient estimate of $P_t$. Nevertheless, we are able to estimate the uniform gradient of a modified version of $P_t$ (cf. Proposition \ref{PP2} below), which implies the desired exponential convergence in (\ref{Z3}).

We remark that the derivative formula and gradient estimate are investigated in \cite{T,Zhang} for SDEs on $\R^d$ driven by L\'evy noises, where in \cite{T} the process may contain   a diffusion part but extensions of  the main results   to  infinite dimensions are not yet available, while  in \cite{Zhang} the main result was also extended to a class of semi-linear SPDEs driven by cylindrical $\aa$-stable processes. Both papers are quite different from the present one, where we aim to describe regularity properties of the semigroup merely using the L\'evy measure of the noise.

\

We will prove Theorems \ref{T1.1} (also Proposition \ref{PP0}), \ref{T1.2} and \ref{T1.3}  in the following three sections respectively. In Section 5 we present two specific examples, with $\mu$ the Wiener measure on a Brownian path space and  the Gaussian measure on an Hilbert space respectively, to illustrated these results.

\section{Proofs of Theorem \ref{T1.1} and Proposition \ref{PP0}}

The key technique of the study is the coupling by change of measure. For readers' convenience, let us briefly recall the main idea of the argument.  To  investigate e.g. the continuity of $P_tf$ along $y\in \BB$, for any $x\in\BB$ we construct a family of processes $\{X_\cdot^\vv\}_{\vv\in [0,1)}$ and the associated probability densities $\{R_\vv\}_{\vv\in {[0,1)}}$    such that
\beg{enumerate} \item[(1)] $X_0^\vv = x+\vv y,\ X_t^\vv= X_t^0,\ \vv\in [0,1),\ t>0;$
\item[$(2)$] Under the probability $R_\vv \P$, the process $X_\cdot^\vv$ is associated to the transition semigroup $(P_s)_{s\ge 0};$
\item[$(3)$] $\lim_{\vv\to 0} R_\vv =R_0=1$ holds in $L^1(\P).$ \end{enumerate} Then, for any bounded measurable function $f$ and $t>0$,
$$\lim_{\vv\to 0} P_t f(x+\vv y)= \lim_{\vv\to 0} \E\big[R_\vv f(X_t^\vv)\big] =\lim_{\vv\to 0} \E\big[R_\vv f(X_t^0)\big] = \E \big[R_0 f(X_t^0)\big] = P_t f(x).$$
To realize this idea in the present setting, the following Lemma \ref{L2.1} will play a crucial role.

For fixed  $t>0$, let $\LL$ be the distribution of $L:=(L_s)_{s\in [0,t]}$ which is a probability measure on the paths pace
\beg{equation*}\beg{split} W_t=\big\{w:  [0,t]\to \BB\ &\text{is\  right-continuous\ having left limits}\big\} \end{split}\end{equation*}equipped with the Skorokhod metric.
  For any $w\in W_t$,  let
$$w(\d z,\d s):= \sum_{s\in [0,t], \DD w_s \ne 0}  \dd_{(\DD w_s, s)},$$ which  records jumps of the path $w$, where $\DD w_s= w_s-w_{s-}$. Let
$$w(g)=\int_{\BB\times [0,t]}g(z,s)\,w(\d z,\d s)=\sum_{s\in [0,t], \DD w_s\ne 0} g(\DD w_s, s),\ \ \ g\in L^1(w).$$
A function $g$ on $\BB$ will be also regarded as a function on $\BB\times [0,t]$ by letting $g(z,s)=g(z)$ for $(z,s)\in\BB\times [0,t].$

Moreover,  write $L=L^1+L^0$, where $L^1$ and $L^0$ are two independent L\'evy processes
with L\'evy measure $\nu-\nu_0$ and $\nu_0$ respectively, and $L^0$ does not have a Gaussian term.
Let $\LL^1$ and $\LL^0$ be the distributions of $L^1$ and $L^0$ respectively. We have $\LL=\LL^1*\LL^0.$

Repeating the proof of  \cite[Lemma 2.1]{W11} where $\BB=\R^d$, we have the following result.

\beg{lem}\label{L2.1} For any $h\in L^1(W_t\times\BB\times [0,t]; \LL^0\times \nu_0\times \d s)$,
\beq\label{F1}\beg{split} &\int_{W_t\times\BB\times [0,t]}  h(w,z,s)\, \LL^0(\d w)\,\nu_0(\d z)\,\d s\\
&=\int_{W_t}\LL^0(\d w)\int_{\BB\times [0,t]} h(w-z1_{[s,t]},z, s)\,w(\d z,\d s).\end{split}\end{equation}\end{lem}
To prove Theorem \ref{T1.1}, we also need the following two more lemmas.

\beg{lem}\label{L2.2} Let $y\in\BB$ such that $\si^{-1}T_s y\in\H$ for any $s>0$, and let $g$ be a non-negative measurable function on $\BB$ such that
$\nu_0(g):=\int_\BB g\,\d\nu_0 <\infty$ and $w(g)>0$ for $\LL^0$-a.e. $w$. Let
  $$\Phi_\vv(w,z,s)= \ff{ \vp_{\vv \si^{-1}T_s y}(z)(\rr_0 g)(z-\vv \si^{-1}T_s y)}{w(g)
+ g(z-\vv \si^{-1}T_sy)},\ \ \vv\ge 0.$$  If $(\ref{N})$ holds for any $h\in \H$, then $\{\Phi_\vv\}_{\vv\in [0,1)}$ is uniformly integrable w.r.t.
$\LL^0\times\mu\times \d s$  on $W_t\times \BB\times [0,t].$ \end{lem}
\beg{proof} Since $\vp_0\equiv 1$, applying (\ref{F1}) to $h(w,z,s)= \ff{g(z)}{w(g)}$ we obtain
\beg{equation}\label{WW1}\beg{split} &\int_{W_t \times\BB\times [0,t]} \Phi_0(w,z,s)\,\LL^0(\d w)\,\mu(\d z)\,\d s\\
 &= \int_{W_t \times\BB\times [0,t]} \ff{g(z)}{w(g)+g(z)}\, \LL^0(\d w)\,\nu_0(\d z)\,\d s\\
&= \int_{W_t} \LL^0(\d w)\int_{\BB\times [0,t]} \ff{g(z)}{w(g)}\,w(\d z,\d s)\\
&= 1.\end{split}\end{equation}
Next,
  by (\ref{1.1}) and the integral transform
$z\mapsto z-\vv\si^{-1}T_sy$, for any $F\in \B_b(W_t\times \BB\times [0,t])$ we have
\beg{equation}\label{WW}\beg{split} &\int_{W_t\times\BB\times [0,t]} F(w, z+\vv\si^{-1}T_s y, s)\Phi_0(w,z,s)\,\LL^0(\d w)\,\mu(\d z)\,\d s \\
&= \int_{W_t\times\BB\times [0,t]} \ff{F(w, z+\vv\si^{-1}T_s y, s)(\rr_0g)(z)}{w(g)+g(z)}\,\LL^0(\d w)\,\mu(\d z)\,\d s \\
&=\int_{W_t\times\BB\times [0,t]} F(w, z, s)\Phi_\vv(w,z,s)\,\LL^0(\d w)\,\mu(\d z)\,\d s.\end{split}\end{equation} Letting $F=1$ and combining this with (\ref{WW1}), we conclude that $\{\Phi_\vv\}_{\vv\in [0,1)}$ are probability densities w.r.t. $\LL^0\times \mu\times \d s$. Moreover,
applying (\ref{WW}) to $F(w,z,s)= 1_{\{\Phi_\vv>R\}}$ for $R>0$ and letting
$$\eta(w,z,s)= \sup_{\vv\in (0,1)} \ff{(\rr_0g)(z)}{w(g)+g(z)} \vp_{\vv \si^{-1}T_s y}(z+\vv\si^{-1}T_s y)$$ which is finite
$\LL^0\times \mu\times \d s$-a.e., we obtain
\beg{equation*}\beg{split} &\sup_{\vv\in (0,1)}\int_{W_t\times \BB\times [0,t]} (\Phi_\vv 1_{\{\Phi_\vv>R\}})(w,z,s)\,\LL^0(\d w)\,\mu(\d z)\,\d s \\
&\le\int_{W_t\times \BB\times [0,t]} (\Phi_0 1_{\{\eta >R\}})(w,z,s)\,\LL^0(\d w)\,\mu(\d z)\,\d s \end{split}\end{equation*} which goes to zero as $R\to\infty$ by the dominated
 convergence theorem. \end{proof}

 \beg{lem}\label{L2.3} Let $E$ be a topology space and $C_b(E)$ be the class of all bounded continuous functions on $\BB$. Let $\mu_0$ be a finite measure on the Borel $\si$-field $\B$ such that $C_b(E)$ is dense in
 $L^1(\mu_0)$. Let $\{f_n\}_{n\ge 1}$ be a sequence of uniformly integrable functions w.r.t. $\mu_0$ such that
 $$\lim_{n\to\infty} \int_E (Ff_n)\,\d\mu_0= \int_E (Ff_0)\,\d\mu_0 $$ holds for some $f_0\in L^1(\mu_0)$ and all $F\in C_b(E)$. Then it holds also for any
 $F\in \B_b(E).$ \end{lem}
 \beg{proof} Let $\vv(R)= \sup_{n\ge 1} \mu_0 (|f_n-f_0|1_{\{|f_n-f_0|>R\}})$ which goes to zero as $R\to\infty$. For any $F\in \B_b(E)$, let $\{F_m\}_{m\ge 1}\subset C_b(E)$ such that $\|F_m\|_\infty\le \|F\|_\infty$ and $\mu_0(|F_m-F|)\le \ff 1 m.$ Then
 \beg{equation*}\beg{split} \bigg|\int_E F(f_n-f_0)\,\d\mu_0\bigg|& \le \bigg|\int_E F(f_n-f_0)1_{\{|f_n-f_0|\le R\}}\,\d\mu_0\bigg|+\|F\|_\infty\vv(R)\\
 &\le \bigg|\int_EF_m(f_n-f_0)1_{\{|f_n-f_0|\le R\}}\,\d\mu_0\bigg|+\|F\|_\infty\vv(R)+\ff R m\\
 &\le \bigg|\int_EF_m(f_n-f_0)\, \d\mu_0\bigg|+2\|F\|_\infty\vv(R)+\ff R m.\end{split}\end{equation*} By first letting $n\to\infty$ then $m\to\infty$ and finally $R\to\infty$, we complete the proof.\end{proof}

\beg{proof}[Proof of Theorem \ref{T1.1}]   (1) Let $f\in \B_b(\BB)$ and $x,y\in\BB$ be fixed. For any $\vv>0$, let
$$F_\vv(w)= f\bigg(T_t (x+\vv y) +\int_0^tT_{t-s}\si\,\d w_s\bigg),$$ where $\int_0^t T_{t-s}\si\,\d w_s$ is the It\^o stochastic integral which is $\LL$-a.e. well-defined. Let e.g.   $g = \ff 1 {\rr_0\lor 1}.$ We have $\nu_0(g)<\infty$ and,
since $\nu_0(\BB)=\infty$ and
$g>0$,   $w(g)>0$ for $\LL^0$-a.e. $w$. Then, by (\ref{S}) and Lemma \ref{L2.1} for
$$h(w^0,z,s)= \ff{F_0(w^1+w^0+(z+\vv\si^{-1}T_sy)1_{[s,t]})g(z)}{w^0(g)+g(z)},$$ we obtain
\beg{equation*}\beg{split} &P_t f(x+\vv y) \\
&= \E F_\vv(L^1+L^0)\\
&= \int_{W_t^2}\LL^1(\d w^1)\,\LL^0(\d w^0)\int_{\BB\times [0,t]} \ff{F_\vv(w^1+w^0) g(z)}{w^0(g)}\,w^0(\d z,\d s)\\
&=\int_{W_t^2}\LL^1(\d w^1)\,\LL^0(\d w^0)\int_{\BB\times [0,t]} \ff{F_0(w^1+w^0+\vv\si^{-1}T_s y 1_{[s,t]})g(z)}{w^0(g)}\,w^0(\d z,\d s)\\
&= \int_{W_t^2}\LL^1(\d w^1)\,\LL^0(\d w^0)\int_{\BB\times [0,t]} \ff{F_0(w^1+w^0+(z+\vv\si^{-1}T_s y) 1_{[s,t]})g(z)}{w^0(g)+ g(z) }\,\nu_0(\d z)\,\d s \\
&= \int_{W_t^2}\LL^1(\d w^1)\,\LL^0(\d w^0)\int_{\BB\times [0,t]} \ff{F_0(w^1+w^0+(z+\vv\si^{-1}T_s y) 1_{[s,t]})(\rr_0g)(z)}{w^0(g)+ g(z) }\,\mu(\d z)\,\d s.\end{split}\end{equation*} Since $\vv \si^{-1}T_s y\in \H$ so that (\ref{1.1}) implies
$$\mu(\d z- \vv \si^{-1} T_s y)= \vp_{\vv\si^{-1}T_s y}(z)\,\mu(\d z),$$  by using the integral transform $z\mapsto z -\vv\si^{-1}T_sy$ and noting that $\LL=\LL^1 *\LL^0$, we obtain
\beg{equation}\label{F4'}\beg{split} & P_t f(x+\vv y)\\
 &= \int_{W_t} \LL(\d w)\int_{\BB\times [0,t]} \ff{F_0(w +z 1_{[s,t]})(\rr_0 g)(z-\vv \si^{-1}T_s y)}{w^0(g)
+ g(z-\vv \si^{-1}T_sy)}\vp_{\vv \si^{-1}T_s y}(z)\,\mu(\d z)\,\d s\\
&=\int_{W_t} \LL^1(\d w^1) \int_{W_t\times\BB\times [0,t]} F_0(w^1+w^0+z1_{[s,t]}) \Phi_\vv(w^0,z,s) \,\LL^0(\d w^0)\,\mu(d z)\,\d s.\end{split}\end{equation}
 Therefore,
 it suffices
to show that
$$   \lim_{\vv\to 0} \int_{W_t\times \BB\times [0,t]}(F  \Phi_\vv)(w,z,s) \,\LL^0(\d w) \, \mu(\d z) \,\d s
  =\int_{W_t\times \BB\times [0,t]}(F  \Phi_0)(w,z,s) \,\LL^0(\d w) \, \mu(\d z)\, \d s $$ holds for any $F\in \B_b(W_t\times \BB\times [0,t]).$  According to (\ref{WW}), this holds provided $F\in C_b(W_t\times\BB\times [0,t]).$ Since the Borel $\si$-field on the Polish space $W_t\times \BB\times [0,t]$ is induced by bounded continuous functions, $C_b(W_t\times \BB\times [0,t])$ is dense in $L^1(\LL^0\times \mu\times \d s)$. Thus, the desired assertion follows from Lemmas \ref{L2.2} and \ref{L2.3}.

  (2) For any sequence $\{y_n\}\subset \BB$ converging to $0$ as $n\to \infty$, define
  $$\Psi_n(w,z,s)= \ff{\varphi_{\si^{-1}T_s y_n}(z) (\rr_0g)(z-\si^{-1}T_sy_n)}{w(g)+g(z-\si^{-1}T_sy_n)},\ \ n\ge 1.$$ Using $\si^{-1} T_s y_n$ to replace $\vv h$ in   the proof of Lemma \ref{L2.2}, we see that (\ref{1.5'}) implies that $\{\Psi_n\}_{n\ge 1}$ is uniformly integrable w.r.t. $\LL^0\times \mu\times \d s$ on $W_t\times\BB\times [0,t]$. Therefore, using $\si^{-1} T_s y_n$ to replace $\vv h$ in the proof of (1), we obtain $\lim_{n\to \infty} P_t f(x+y_n)= P_t f(x)$ for any $f\in \B_b(\BB), t>0$ and $x\in \BB.$
 \end{proof}
 \beg{proof}[Proof of Proposition \ref{PP0}] Since $\{h\in\H: \sup_{s\in [0,1]}\|\si^{-1}T_sh\|_\H<\infty\}$ is dense in $\BB$, it suffices to prove for $y\in\H$ such that $\|\si^{-1}T_sy\|_\H\le 1$ for $s\in [0,1].$
 Since the boundedness of $\rr_0g$ implies $\nu_0(g)<\infty$ and $\nu_0(\{g>0\})=\infty$ implies   $w(g)>0, \LL^0$-a.e., (\ref{F4'}) holds true. By (\ref{F4'}) and (\ref{T1-2}) we have
 \beg{equation}\label{WFY1}\beg{split} &\ff{|P_tf(x+\vv y)-P_tf(x)|}{\vv} \\
 &\le  \ff{\|f\|_\infty}\vv   \int_{W_t\times\BB\times [0,t]} |\Phi_\vv(w,z,s)-\Phi_0(w,z,s)|\,\LL^0(\d w)\,\mu(\d z)\,\d s,\ \ \vv>0.
  \end{split}\end{equation} Since $\rr_0g$ is bounded and Lipschitz continuous in $\|\cdot\|_\H$, there exists a constant $c_1>0$ such that
 \beq\label{WFY2} \beg{split} &|\Phi_\vv(w,z,s)-\Phi_0(w,z,s)|\\
 &\le \ff{|\vp_{\vv \si^{-1}T_s y}(z)-1|}{w(g)} +\bigg|\ff{(\rr_0g)(z-\vv\si^{-1}T_sy)}{w(g)+g  (z-\vv\si^{-1}T_sy)}-\ff{(\rr_0g)(z)}{w(g)+g(z)}\bigg|\\
 &\le  \ff{|\vp_{\vv \si^{-1}T_s y}(z)-1|+c_1\|\vv\si^{-1}T_sy\|_\H}{w(g)} +\ff{c_1|g(z-\vv\si^{-1}T_sy)-g(z)| }{w(g)^2}.\end{split}\end{equation} Moreover,
 according to \cite[Lemma 2.2]{W11} with $\BB$ in place of $\R^d$, for any $\theta>0$, we have
 $$\int_{W_t} \ff{\LL^0(\d w)}{w(g)^\theta}= \ff 1 {\GG(\theta)} \int_0^\infty r^{\theta-1} \e^{-t\nu_0(1-\e^{-rg})}\,\d r.$$ Combining this with (\ref{WFY1}) and
 (\ref{WFY2}) and letting $\vv\to 0$, we obtain the desired gradient estimate. According to the proof of Theorem \ref{T1.3} in Section 4 with $P_t^1$ replaced by $P_t$, this along with the assumption on $T_t$   implies the second assertion.\end{proof}

\section{Proof of Theorem \ref{T1.2}}
By the triangle inequality for $\|\cdot\|_{var}$, it suffices to prove both  assertions for small enough $\|y\|_\BB$.
\subsection{Case (i)}
Let $\|y\|_\BB\le \ff {1\land (\ff{r_0}2)} {1+c\|\si^{-1}\|_\BB}$, which implies that
\beq\label{BB} \|\si^{-1}T_s y\|_\BB+\|y\|_\BB \le 1\land \ff{r_0} 2,\ \ \ s\in [0,t].\end{equation} Moreover, since $\|P_t (x,\cdot)-P_t(x+y,\cdot)\|_{var}\le 2$ holds for all $x,y\in\BB$ and $t>0$,  we only have to prove the desired inequality for large $t>0$. From now on, let us assume $t\ge 2$ and (\ref{BB}).

 Now, let $t\ge 2$ and $x,y\in \BB$ such that (\ref{BB}) holds. Since $T_s \si$ is bounded in $\BB$ uniformly in $s$, for any $z\in\BB$,
 $$J^z(w):= T_t z+ \int_0^t T_{t-s}\si\,\d w_s$$ is $\LL$-a.e. (also $\LL^1$-a.e. and $\LL^0$-a.e.) defined. Moreover, due to (\ref{S}) and $L=L^1+L^0$,
 \beq\label{PP}X_t^z= J^z(L)= J^z(L^1+L^0),\ \ \ z\in \BB, t>0.\end{equation}
 Next, let
$$ \tau_1(w)= \inf\{s>0: \DD w_s\ne 0\}, \ \ \tau_{i+1}(w)= \inf\{s>\tau_i(w):\ \DD w_s\ne 0\},\ \ \ i\ge 1.$$ Since
 $\ll_0=\nu_0(\BB)\in (0,\infty),$ we have $\P(\tau_1(L^0)\ge s)=\e^{-\ll_0 s}\in (0,1)$ for $s>0$, and $\tau_i(L^0)\uparrow \infty$
 as $i\uparrow \infty$. Moreover, let $$N_s(w)=\#\{i\ge 1:\ \tau_i(w)\le s\},\ \ \ s\ge 0.$$ Then $\{N_s(L^0)\}_{s\in [0,t]}$ is a Poisson process
 with parameter $\ll_0$.
Similarly, let   $$ \tt \tau_1(w)= \inf\{s> 1: \DD w_s\ne 0\}, \ \ \tt \tau_{i+1}(w)= \inf\{s>\tt \tau_i(w):\ \DD w_s\ne 0\},\ \ \ i\ge 1$$ and
 $$\tt N_s(w)= N_{s+1}(w)-N_1(w)=\#\{i\ge 1:\ \tt\tau_i\le s+1\}=\#\{i\ge 1: \ 1<\tau_i\le s+1\},\ \ s\in [0,t-1].$$ Then $\{\tt N_s(L^0)\}_{s\in [0,t-1]}$ is a Poisson process with parameter $\ll_0$, which is independent of $\{\tau_1(L^0)> \vv\}=\{N_\vv(L^0)=0\}$ for $\vv\in (0,1).$
 Finally, let
 \beg{equation*}\beg{split} &\xi_i(w)=  1_{B(z_0,\ff{r_0}2)}(\DD w_{\tt\tau_i(w)}),\\
 &\tt\xi_i(w) = \ff{\rr_0(\DD w_{\tt\tau_i(w)}+ \si^{-1}T_{\tt\tau_i(w)}y)}{\rr_0(\DD w_{\tt\tau_i(w)})}
 \big(1_{B(z_0-\si^{-1}T_{\tt\tau_i(w)}y, \ff{r_0}2)}\vp_{-\si^{-1}T_{\tt\tau_i(w)}y}\big)(\DD w_{\tt\tau_i(w)}),\ \ \ i\ge 1.\end{split}\end{equation*} We have
 \beq\label{2*1} \beg{split} &\int_{\BB\times [1,t]}1_{B(z_0,\ff{r_0}2)(z)}\, w(\d z,\d s)= \sum_{i=1}^{\tt N_{t-1}(w)} \xi_i(w),\\
 &\int_{\BB\times [1,t]} \ff{\rr_0(z+\si^{-1}T_s y)}{\rr_0(z)} \big(1_{B(z_0-\si^{-1}T_s y, \ff{r_0}2)}\vp_{-\si^{-1}T_sy}\big)(z)\,w(\d z,\d s)=
 \sum_{i=1}^{\tt N_{t-1}(w)} \tt \xi_i(w),  \end{split}  \end{equation} where
 we set $\sum_{i=1}^0 =0$ by convention.  From now on, we will simply denote
 $$\tau_i=\tau_i(L^0),\ \tt\tau_i=\tt\tau_i(L^0),\ \xi_i=\xi_i(L^0),\ N_s= N_s(L^0),\ \tt N_s=\tt N_s(L^0).$$
 To characterize the coupling property of the solution, we first prove the following relation formula for $X_t^x$ and $X_t^{x+y}.$

 \beg{lem}\label{L3.1} For any $f\in \B_b(\BB)$ and $\vv\in (0,1)$,
 $$\E\bigg\{f(X_t^x)1_{\{\tau_1>\vv\}}\sum_{i=1}^{\tt N_{t-1}}\xi_i\bigg\}= \E\bigg\{f(X_t^{x+y})1_{\{\tau_1>\vv\}}\sum_{i=1}^{\tt N_{t-1}}\tt\xi_i\bigg\}.$$\end{lem}

 \beg{proof} Since $\vv\in (0,1)$,   $\{\tau_1(w)>\vv\}= \{\tau_1(w+z1_{[s,t]})>\vv\}$ holds for $s\in [1,t]$ and $z\in\mathbb B$. Moreover, by the definition of $J^x$ we have
 $$J^x(w^1+w^0)+ T_{t-s} \si z= J^x(w^1+w^0 +z1_{[s,t]}).$$ By   Lemma \ref{L2.1} for
 $$h(w^0,z,s) = f(J^x(w^1,+w^0)+ T_{t-s}\si z)1_{\{\tau_1\ge \vv\}\times B(z_0,\ff{r_0}2)\times [1,t]}(w^0,z,s) $$ with fixed $w^1$  and using (\ref{2*1}),
 we obtain
 \beg{equation*} \beg{split}&\int_{W_t^2}\LL^1(\d w^1)\,\LL^0(\d w^0)\int_{B(z_0,\ff{r_0}2)\times [1,t]} f(J^x(w^1+w^0)+T_{t-s}\si z) 1_{\{\tau_1>\vv\}}(w^0)\,\nu_0(\d z)\,\d s\\
 &= \int_{W_t^2}\LL^1(\d w^1)\,\LL^0(\d w^0)\int_{B(z_0,\ff{r_0}2)\times [1,t]} f(J^x(w^1+w^0 + z1_{[s,t]})) 1_{\{\tau_1>\vv\}}(w^0+z1_{[s,t]})\,\nu_0(\d z)\,\d s\\
 &= \int_{W_t^2}1_{\{\tau_1>\vv\}}(w^0)f(J^x(w^1+w^0 ) )\, \LL^1(\d w^1)\,\LL^0(\d w^0)\int_{B(z_0,\ff{r_0}2)\times [1,t]} \, w^0(\d z,\d s).\end{split}\end{equation*}
 Combining this with (\ref{PP}) and the first equation in (\ref{2*1}) we arrive at
   \beg{equation} \label{2*2}\beg{split}&\int_{W_t^2}\LL^1(\d w^1)\,\LL^0(\d w^0)\int_{B(z_0,\ff{r_0}2)\times [1,t]} f(J^x(w^1+w^0)+T_{t-s}\si z) 1_{\{\tau_1>\vv\}}(w^0)\,\nu_0(\d z)\,\d s\\
 &= \E\bigg\{f(X_t^x)1_{\{\tau_1>\vv\}}\sum_{i=1}^{\tt N_{t-1}}\xi_i\bigg\}.\end{split}\end{equation}

 On the other hand, noting that
 $$J^x(w^1+w^0)+ T_{t-s}\si z= J^{x+y} (w^1+w^0+(z-\si^{-1}T_s y)1_{[s,t]}),$$ by Lemma \ref{L2.1} and the integral transform
 $z\mapsto z+\si^{-1}T_s y,$ we obtain
 \beg{equation*}\beg{split} &\int_{W_t^2}\LL^1(\d w^1)\,\LL^0(\d w^0)\int_{B(z_0,\ff{r_0}2)\times [1,t]} f(J^x(w^1+w^0)+T_{t-s}\si z) 1_{\{\tau_1>\vv\}}(w^0)\,\nu_0(\d z)\,\d s\\
 &=\int_{W_t^2}\LL^1(\d w^1)\,\LL^0(\d w^0)\int_{B(z_0,\ff{r_0}2)\times [1,t]} f(J^{x+y}(w^1+w^0 +\{z-\si^{-1}T_sy\}1_{[s,t]}))\\
 &\qquad\qquad\qquad\qquad \qquad\qquad\qquad\qquad\times   1_{\{\tau_1>\vv\}}(w^0+
 \{z-\si^{-1}T_sy\}1_{[s,t]})\,\nu_0(\d z)\,\d s\\
 &=\int_{W_t^2}\LL^1(\d w^1)\,\LL^0(\d w^0)\int_{[1,t]}\d s \int_{B(z_0-\si^{-1}T_sy,\ff{r_0}2)} f(J^{x+y}(w^1+w^0 +z1_{[s,t]})) \\
 &\qquad\qquad\qquad\qquad \qquad\qquad\times  1_{\{\tau_1>\vv\}}(w^0+
 z1_{[s,t]})\ff{\rr_0(z+\si^{-1}T_s y)}{\rr_0(z)} \vp_{-\si^{-1}T_s y}(z)\,\nu_0(\d z)\\
 &=\int_{W_t^2}1_{\{\tau_1>\vv\}}(w^0)f(J^{x+y}(w^1+w^0 ) ) \,\LL^1(\d w^1)\,\LL^0(\d w^0)
 \int_{\BB\times [1,t]}\ff{\rr_0(z+\si^{-1}T_sy)}{\rr_0(z)}\\
 &\qquad\qquad\qquad\qquad \qquad\qquad\times (1_{B(z_0-\si^{-1}T_s y,\ff{r_0}2)}\vp_{-\si^{-1}T_sy})(z)\, w^0(\d z,\d s).\end{split}\end{equation*}
 Combining this with (\ref{PP}) and the second equation in (\ref{2*1}), we conclude that
  \beg{equation*}  \beg{split}&\int_{W_t^2}\LL^1(\d w^1)\,\LL^0(\d w^0)\int_{B(z_0,\ff{r_0}2)\times [1,t]} f(J^x(w^1+w^0)+T_{t-s}\si z) 1_{\{\tau_1>\vv\}}(w^0)\,\nu_0(\d z)\,\d s\\
 &= \E\bigg\{f(X_t^{x+y})1_{\{\tau_1>\vv\}}\sum_{i=1}^{\tt N_{t-1}}\tt\xi_i\bigg\}.\end{split}\end{equation*}
 The desired formula follows from  this and  (\ref{2*2}).
 \end{proof}

 \beg{lem}\label{L3.2}  Given $\tt N$, $\{\xi_i\}$ and $\{\tt \xi_i\}$ are two conditionally i.i.d. sequences with
 $$\E(\xi_i|\tt N)=    \E(\xi_i^2|\tt N)= \ff{\nu_0(B(z_0,\ff{r_0}2))}{\ll_0},$$ and
 $$\E(\tt\xi_i|\tt N)=\ff{\nu_0(B(z_0,\ff{r_0}2))}{\ll_0},\ \ \ \E(\tt\xi_i^2|\tt N)\le \ff{ \dd_1(\tt\tau_1)}{\ll_0},\ \ i\ge1.$$\end{lem}

 \beg{proof}  Since $\{\DD L^0_{\tt\tau_i}\}$ are i.i.d. and   independent of $\tt N$ with common  distribution $\ff 1 {\ll_0} \nu_0$,
 and since $\tt \tau_i$ is determined by $\tt N$,    it is clear that both $\{\xi_i\}$ and $\{\tt\xi_i\}$ are conditionally i.i.d. sequences given $\tt N$.
 Moreover, we have $$\E(\xi_i^2|\tt N)=\E(\xi_i|\tt N)=\E \xi_i = \ff{\nu_0(B(z_0,\ff{r_0}2))}{\ll_0}. $$ Noting that $\nu_0(\d z)=\rr_0(z)\,\mu(\d z)$ and
 $\mu(\d z+h)=\vp_{-h}(z)\,\mu(\d z)$, we have
\beg{equation*}\beg{split} \E(\tt \xi_i|\tt N)
&= \ff 1 {\ll_0} \int_{B(z_0-\si^{-1}T_{\tt\tau_i} y,\ff{r_0}2)} \ff{\rr_0(z+\si^{-1}T_{\tt\tau_i}y)}{\rr_0(z)}\vp_{-\si^{-1}T_{\tt\tau_i}y}(z)\,\nu_0(\d z)\\
&=\ff 1 {\ll_0} \int_{B(z_0-\si^{-1}T_{\tt\tau_i}y,\ff{r_0}2)} \rr_0(z+\si^{-1}T_{\tt\tau_i}y) \vp_{-\si^{-1}T_{\tt\tau_i}y}(z)\,\mu(\d z)\\
&=\ff 1 {\ll_0}  \int_{B(z_0,\ff{r_0}2)}  \rr_0(z)\,  \mu(\d z)\\
&=\ff{\nu_0(B(z_0,\ff{r_0}2))}{\ll_0}.\end{split}\end{equation*}  Moreover, since $\|\si^{-1}T_{\tt\tau_i}y\|_\BB\le 1\land \ff{r_0}2$   and $\tt\tau_i\ge\tt\tau_1$, we obtain
\beg{equation*}\beg{split} \E({\tt \xi_i}^2|\tt N)
&= \ff 1 {\ll_0} \int_{B(z_0-\si^{-1}T_{\tt\tau_i}y,\ff{r_0}2)} \ff{\rr_0(z+\si^{-1}T_{\tt\tau_i}y)^2}{\rr_0(z)^2}\vp_{-\si^{-1}T_{\tt\tau_i}y}(z)^2\,\nu_0(\d z)\\
&\le \ff 1 {\ll_0} \int_{B(z_0,r_0)}  \ff{\rr_0(z+\si^{-1}T_{\tt\tau_i} y)^2 \vp_{-\si^{-1}T_{\tt\tau_i}y}(z)^2}{\rr_0(z)}\,\mu(\d z)\\
&\le \ff{\dd_1(\tt\tau_1)}{\ll_0}.\end{split}\end{equation*}This completes the proof.
 \end{proof}

\beg{proof}[Proof of Theorem \ref{T1.2} (i)]   As explained in the beginning of this section, we assume that $t\ge 2$ and let $y$ satisfy (\ref{BB}).
By Lemma \ref{L3.1} and $\tt\tau_1\ge\tau_1$, for any $f\in \B_b(\BB)$ with $\|f\|_\infty\le 1$ we have
\beq\label{3.3} \beg{split} & \Big|\E\big(f(X_t^x)-f(X_t^{x+y})\big)1_{\{\tau_1>\vv\}}\Big|\\
&\le \E\bigg|1-\ff 1 {\nu_0(B(z_0,\ff{r_0}2))(t-1)}\sum_{i=1}^{\tt N_{t-1}}\xi_i\bigg| + \E \bigg\{1_{\{\tt\tau_1>\vv\}} \bigg|1-\ff 1 {\nu_0(B(z_0,\ff{r_0}2))(t-1)}\sum_{i=1}^{\tt N_{t-1}}\tt\xi_i\bigg| \bigg\}.\end{split}\end{equation}
Noting    that $\tt\tau_1$ is determined by $\tt N$, we obtain from Lemma \ref{L3.2} that
\beg{equation*}\beg{split} & \E \bigg\{1_{\{\tt\tau_1>\vv\}} \bigg|1-\ff 1 {\nu_0(B(z_0,\ff{r_0}2))(t-1)}\sum_{i=1}^{\tt N_{t-1}}\tt\xi_i\bigg| \bigg\}^2\\
&=\E\bigg\{1_{\{\tt\tau_1>\vv\}}\bigg(\ff {\sum_{i,j=1}^{\tt N_{t-1}} \E(\tt \xi_i\tt \xi_j|\tt N)} {\nu_0(B(z_0,\ff{r_0}2))^2(t-1)^2}-
\ff {2\sum_{i=1}^{\tt N_{t-1}} \E(\tt\xi_i|\tt N)} {\nu_0(B(z_0,\ff{r_0}2))(t-1)}
+1\bigg)\bigg\}\\
&\le \E\bigg\{1_{\{\tt\tau_1>\vv\}}\bigg(\ff {\tt N_{t-1}^2-\tt N_{t-1}}{\ll_0^2(t-1)^2}+ \ff{\tt N_{t-1}\dd_1(\tt\tau_1)}{\ll_0\nu_0(B(z_0,\ff{r_0}2))^2(t-1)^2}
  -\ff {2\tt N_{t-1}} {\ll_0(t-1)} +1\bigg)\bigg\}\\
&\le  \ff{\dd_1(\vv)}{\nu_0(B(z_0,\ff{r_0}2))^2(t-1)}.\end{split}\end{equation*}
Similarly and even simpler, we have
$$\E\bigg(1-\ff 1 {\nu_0(B(z_0,\ff{r_0}2))(t-1)}\sum_{i=1}^{\tt N_{t-1}}\xi_i\bigg)^2\le \ff{1}{\nu_0(B(z_0,\ff{r_0}2))(t-1)}.$$ Combining these with (\ref{3.3}) and noting that
$t-1\ge 1$, we arrive at
$$\Big|\E\big(f(X_t^x)-f(X_t^{x+y})\big)1_{\{\tau_1>\vv\}}\Big|\le \ff{C_1\ss{\dd_1(\vv)}}{\ss t}$$ for some constant $C_1>0$ independent of $t,x,y$ and $\vv\in (0,1)$.
Therefore, there exists a constant $C>0$ independent of $t,x,y$ and $\vv\in (0,1)$ such that for $\|f\|_\infty\le 1$,
\beg{equation*}\beg{split} |P_t f(x)-P_t f(x+y)|&\le \ff{C_1\ss{\dd(\vv)}}{\ss t} +\E\Big|\big(f(X_t^x)-f(X_t^{x+y})\big)1_{\{\tau_1\le\vv\}}\Big|\\
&\le  \ff{C_1\ss{\dd_1(\vv)}}{\ss t}  + 2\P(\tau_1\le\vv)\\
&=  \ff{C_1\ss{\dd_1(\vv)}}{\ss t}+2(1-\e^{-\ll_0\vv})\\
&\le C\bigg(\vv+\ff{\ss{\dd_1(\vv)}}{\ss t}\bigg).\end{split}\end{equation*} This completes the proof.
\end{proof}

\subsection{Case (ii)}
For every
$\eta>0$, define ${\nu}_\eta$ on $\BB$ as follows:
\begin{equation*}
    {\nu}_\eta(A)
    =
    \begin{cases}
        \nu(A),                           & \text{if\ \ } \nu(\BB)<\infty;\\
        \nu(A\setminus \{z: \|z\|_{\BB}<\eta\}), & \text{if\ \ } \nu(\BB)=\infty,
    \end{cases}
\end{equation*} where $A\in \B$. Then $\nu_\eta$ is a finite measure on $(\BB,\B)$.
Recall that for any two finite measures $\pi_1$ and $\pi_2$ on $(\BB,\mathscr{B})$,
$\pi_1\wedge\pi_2:=\pi_1-(\pi_1-\pi_2)^+$, where $(\pi_1-\pi_2)^{\pm}$ refers to the
Jordan-Hahn decomposition of the signed measure $\pi_1-\pi_2$. In
particular, $\pi_1\wedge\pi_2=\pi_2\wedge\pi_1$, and $$(\pi_1\wedge
\pi_2)(\BB)=\frac{1}{2}\big(\pi_1(\BB)+\pi_2(\BB)-\|\pi_1-\pi_2\|_{var}\big).$$
The following is an extension of the main result in \cite{SW2} to the infinite-dimensional setting.

\begin{thm}\label{th1}
  Let $X_t$ be the process determined by \eqref{E1}. Assume that $\sigma$ is invertible, and that there exist $\eta,\varrho>0$ such that
\begin{equation}\label{th2233}
    \gamma(\eta,  \varrho,\vv):=\inf_{t\ge\vv, \|x\|_{\BB}\le \varrho}\big\{\nu _\eta\wedge (\delta_{\sigma^{-1}T_tx}*\nu_\eta)\big\}(\BB)>0
\end{equation} holds for any $\vv>0$.
      Then there exists a   constant  $C >0 $    such that for all $x,y\in\BB$ and $t>0$,
\begin{equation}\label{th21} \|P_t(x,\cdot)-P_t(x+y,\cdot)\|_{var}  \le
  C \big(1+\|y\|_{\BB}\big)\inf_{\vv\in (0,1)} \bigg(\vv+   \frac{1}{\sqrt{\gamma(\eta,  \varrho,\vv)t}} \bigg).
\end{equation}
\end{thm}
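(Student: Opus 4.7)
The plan is to mimic the proof of Case (i), replacing the density ratio $\rr_0(\cdot+\si^{-1}T_s y)\vp_{-\si^{-1}T_s y}/\rr_0$ appearing in Lemma \ref{L3.1} by the Radon-Nikodym density of the overlap measure $\mu^*_s:=\nu_\eta\wedge(\delta_{\si^{-1}T_s y}*\nu_\eta)$ with respect to $\nu_\eta$; by \eqref{th2233}, this overlap has mass at least $\gamma:=\gamma(\eta,\varrho,\vv)$ whenever $\|y\|_\BB\le\varrho$ and $s\ge\vv$. First I reduce to $\|y\|_\BB\le\varrho$ by a chain of at most $\lceil\|y\|_\BB/\varrho\rceil+1$ triangle inequalities for $\|\cdot\|_{var}$ (producing the factor $1+\|y\|_\BB$), and to $t\ge 2$ by the trivial estimate $\|\cdot\|_{var}\le 2$. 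Decompose $L=L_\eta+L^\eta$ with $L_\eta$ compound Poisson of intensity $\nu_\eta$ (rate $\ll_\eta:=\nu_\eta(\BB)$) and $L^\eta$ the independent L\'evy process with L\'evy measure $\nu-\nu_\eta$, and introduce the jump times $\{\tau_i\}$ of $L_\eta$, the jump times $\{\tt\tau_i\}$ in $(1,t]$, and counting processes $N_s,\tt N_s$ exactly as in Section~3.1.

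\textbf{Girsanov identity.} For $s\in(1,t]$ define $\mu^*_s$ as above and set $\mu^{**}_s:=\delta_{-\si^{-1}T_s y}*\mu^*_s$. The inequality $\mu^*_s\le\delta_{\si^{-1}T_s y}*\nu_\eta$ implies $\mu^{**}_s\le\nu_\eta$, and $\mu^{**}_s(\BB)=\mu^*_s(\BB)\ge\gamma$. Let $\rho^*_s:=\d\mu^*_s/\d\nu_\eta,\,\tt\rho^*_s:=\d\mu^{**}_s/\d\nu_\eta\in[0,1]$, and define $S:=\sum_{i=1}^{\tt N_{t-1}}\rho^*_{\tt\tau_i}(\DD L_\eta(\tt\tau_i))$ and $\tt S$ analogously with $\tt\rho^*$. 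Following the pattern of Lemma \ref{L3.1}, I apply Lemma \ref{L2.1} (with $\nu_0,\LL^0$ replaced by $\nu_\eta,\LL_\eta$) in the forward direction, then the translation identity $\int g\,\d\mu^*_s=\int g(\cdot+\si^{-1}T_s y)\,\d\mu^{**}_s$ together with $J^x(w)+T_{t-s}\si\cdot\si^{-1}T_s y=J^{x+y}(w)$, and finally Lemma \ref{L2.1} in the reverse direction, to derive
\[
\E\bigl[f(X^x_t)1_{\{\tau_1>\vv\}}S\bigr]=\E\bigl[f(X^{x+y}_t)1_{\{\tau_1>\vv\}}\tt S\bigr],\quad f\in\B_b(\BB),\ \vv\in(0,1).
\]

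\textbf{Moment bounds and conclusion.} Conditional on $\tt N$ the jumps $\DD L_\eta(\tt\tau_i)$ are i.i.d.\ $\nu_\eta/\ll_\eta$, giving $\E[\rho^*_{\tt\tau_i}(\DD L_\eta(\tt\tau_i))\mid\tt N]=\mu^*_{\tt\tau_i}(\BB)/\ll_\eta$ (similarly for $\tt\rho^*$). Campbell's formula yields $m:=\E S=\E\tt S=\int_1^t\mu^*_s(\BB)\,\d s\ge\gamma(t-1)$; since $(\rho^*_s)^2\le\rho^*_s$ and likewise $(\tt\rho^*_s)^2\le\tt\rho^*_s$, the law of total variance combined with Campbell's variance formula give ${\rm Var}(S),{\rm Var}(\tt S)\le 2m$. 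By memorylessness, conditioning on $\{\tau_1>\vv\}$ preserves the joint law of $(\tt\tau_i,\DD L_\eta(\tt\tau_i))_i$, so subtracting $m\E[f(X^{x+y}_t)\mid\tau_1>\vv]$ from the Girsanov identity and applying Cauchy-Schwarz yields
\[
\bigl|\E[f(X^x_t)-f(X^{x+y}_t)\mid\tau_1>\vv]\bigr|\le\ff{\|f\|_\infty}{m}\bigl(\ss{{\rm Var}(S)}+\ss{{\rm Var}(\tt S)}\bigr)\le\ff{C\|f\|_\infty}{\ss{\gamma(t-1)}}.
\]
Combining with $2\P(\tau_1\le\vv)\le 2\ll_\eta\vv$, supping over $\|f\|_\infty\le 1$ and infimizing over $\vv\in(0,1)$ gives \eqref{th21}.

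\textbf{Main obstacle.} The delicate step is the Girsanov identity: two applications of Lemma \ref{L2.1} (forward and reverse) must be threaded through the translation $z\mapsto z+\si^{-1}T_s y$ that carries $\mu^*_s$ onto $\mu^{**}_s$, and the indicator $1_{\{\tau_1>\vv\}}$ must be verified invariant under both the insertion and removal of a jump at any time $s>1>\vv$. Working with the overlap measure has the advantage that $\rho^*_s,\tt\rho^*_s\in[0,1]$ automatically, which removes the need for the extra quasi-invariance ingredient $\vp_{\pm h}$ present in Case (i).
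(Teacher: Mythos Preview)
Your argument is correct, but it takes a genuinely different route from the paper's. The paper proves Theorem \ref{th1} by transplanting the Mineka--Lindvall--Rogers random walk coupling of \cite{SW2}: it conditions on the jump times $t_1,\dots,t_k$ of the compound Poisson piece, reduces to bounding $\|\delta_{T_t(x-y)}*\mu_{t_1,\dots,t_k}-\mu_{t_1,\dots,t_k}\|_{var}$, builds a coupling $(U_i,U_i')$ with $U_i'-U_i\in\{-a_i,0,a_i\}$ where $a_i=\si^{-1}T_{t_1+\dots+t_i}(x-y)$, and controls the hitting time of the resulting symmetric random walk via the CLT and Chebyshev. This gives a $1/\ss{\gamma k}$ bound per $k$, which is then averaged against the Poisson distribution of $N_t$.

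You instead adapt the Girsanov/variance machinery of Case (i) (Lemmas \ref{L3.1}--\ref{L3.2}), replacing the density ratio $\rr_0(\cdot+\si^{-1}T_sy)\vp_{-\si^{-1}T_sy}/\rr_0$ by the Radon--Nikodym density $\rho^*_s$ of the overlap measure $\nu_\eta\wedge(\delta_{\si^{-1}T_sy}*\nu_\eta)$; the key payoff is $\rho^*_s\le 1$, which yields the second-moment bound $\text{Var}(S)\le m$ directly from Campbell's formula without any quasi-invariance input. This is shorter and neatly unifies Cases (i) and (ii) under a single template. Two minor remarks: your split $\{\tau_1>\vv\}$ is in fact vestigial here (the jumps you use all lie in $(1,t]$, so the $\vv$-dependence enters only through the a priori bound $m\ge\gamma(\eta,\varrho,\vv)(t-1)$), and your variance bound can be sharpened to $\text{Var}(S)\le m$ rather than $2m$ by applying Campbell's formula directly to the Poisson random measure rather than going through conditional variances. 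Neither point affects correctness.
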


We postpone the proof to the end of this subsection and present the proof of Theorem \ref{T1.2} (ii).
 \beg{proof}[Proof of Theorem \ref{T1.2} (ii)]  Without loss of generality, we assume that  $0\notin B(z_0,  r_0 )$. Otherwise, we may take $z_0'\in  B(z_0,r_0)$ and
$r_0'>0$ such that $0\notin B(z_0',r_0')\subset B(z_0,r_0)$, and use $B(z_0',r_0')$ to replace $B(z_0,r_0).$ Moreover, we   take $\varrho\in(0,1)$ small enough  such  that $\|\sigma^{-1}T_t x\|\le 1\land \ff{r_0}4$ holds for all $\|x\|_{\BB}\le \varrho$ and $t>0$.

By \eqref{C}, \eqref{1.1} and the Cauchy-Schwarz inequality, for any $t\ge \vv$ and $\eta\in (0,\ff{r_0}4)$,
\begin{equation*}\label{th211}\begin{aligned}&\inf_{t\ge \vv, \|x\|_{\BB}\le \varrho} \big\{\nu_\eta\wedge (\delta_{\sigma^{-1}T_t x }*\nu_\eta)\big\}(\BB)\\
&\ge \inf_{t\ge \vv, \|x\|_{\BB}\le \varrho}\int_{B(x_0,\ff{r_0}2)}\Big(\rho_0(z)\wedge \big(\rho_0(z-{\sigma^{-1}T_t x })\varphi_{\sigma^{-1}T_t x }(z)\big)\Big)\, \mu(\d z)\\
&\ge {\inf_{t\ge \vv, \|x\|_{\BB}\le \varrho} \Big(\int_{B(x_0,\ff{r_0}2)} \varphi_{\sigma^{-1}T_t x }(z)\,\mu(\d z)\Big)^2}\\
&\qquad\qquad \times\bigg[{\sup_{t\ge \vv, \|x\|_{\BB}\le \varrho}\int_{B(x_0,\ff{r_0}2)}\frac{\varphi_{\sigma^{-1}T_t x }(z)^2}{\rho_0(z)\wedge \big(\rho_0(z-{\sigma^{-1}T_t x })\varphi_{\sigma^{-1}T_t x }(z)\big)}\,\mu(\d z)}\bigg]^{-1}.
\end{aligned}\end{equation*}

Since the measure $\mu$ has full support, \begin{equation*}\label{th211}\begin{aligned}&\inf_{t\ge \vv, \|x\|_{\BB}\le \varrho}\int_{B(x_0,\ff{r_0}2)} \varphi_{\sigma^{-1}T_t x }(z)\,\mu(\d z)\\
&=\inf_{t\ge \vv, \|x\|_{\BB}\le \varrho}\int_{B(x_0,\ff{r_0}2)}\,\mu(\d z- \sigma^{-1}T_t x)\\
&\ge \int_{B(x_0, \ff{r_0}4)} \,\mu(\d z)>0.  \end{aligned}\end{equation*}

On the other hand, by \eqref{C22}, for any $t\ge \varepsilon$,
\begin{equation*}\label{th211}\begin{aligned}&{\sup_{t\ge \vv, \|x\|_{\BB}\le \varrho}\int_{B(x_0,\ff{r_0}2)}\frac{\varphi_{\sigma^{-1}T_t x }(z)^2}{\rho_0(z)\wedge \big(\rho_0(z-{\sigma^{-1}T_t x })\varphi_{\sigma^{-1}T_t x }(z)\big)}\,\mu(\d z)} \\
&\le \sup_{t\ge \vv, \|x\|_{\BB}\le \varrho}\bigg[\int_{B(x_0,\ff{r_0}2)}\frac{\varphi_{\sigma^{-1}T_t x }(z)^2}{\rho_0(z)}\,\mu(\d z)+\int_{B(x_0,\ff{r_0}2)}\frac{\varphi_{\sigma^{-1}T_t x }(z)}{\rho_0(z-\sigma^{-1}T_t x)}\,\mu(\d z)\\
&\le \sup_{t\ge \vv, \|x\|_{\BB}\le \varrho}\bigg[\int_{B(x_0,\ff{r_0}2)}\frac{\varphi_{\sigma^{-1}T_t x }(z)^2}{\rho_0(z)}\,\mu(\d z)+\int_{B(x_0,\ff{r_0}2)}\frac{\mu(\d z-\sigma^{-1}T_t x )}{\rho_0(z-\sigma^{-1}T_t x )}\bigg]\\
&\le\sup_{t\ge \vv, \|x\|_{\BB}\le \varrho}\bigg[\int_{B(x_0,\ff{r_0}2)}\frac{\varphi_{\sigma^{-1}T_t x }(z)^2}{\rho_0(z)}\,\mu(\d z)+\int_{B(x_0,r_0)}\frac{1}{\rho_0(z)}\,\mu(\d z)\bigg]\\
&<\infty.\end{aligned}\end{equation*}
The required assertion \eqref{Coupling1} follows from the conclusions above and \eqref{th21}.
\end{proof}

\begin{proof}[Proof of Theorem \ref{th1}] As indicated in the proof of Theorem \ref{T1.2} (i), we only have to prove the result for  $\|x-y\|_\BB\le\varrho$ and $t\ge 1$.
To this end, we modify the argument from  the proof of \cite[Theorem 1.1]{SW2}. For any $\eta>0$, let $L^\eta$ be a compound Poisson process on $\BB$ with L\'{e}vy measure $\nu_\eta$ such that  $L^\eta$ and $L-L^\eta$ are independent L\'{e}vy processes. Then the random variables
$$
    X_t^{\eta,x}:=T_tx+\int_0^t T_{t-s}\sigma\,\d L_s^\eta
$$
and
$$
    X_t^x-X_t^{\eta,x}:=\int_0^t  T_{t-s}\sigma\,d(L_s-L_s^\eta)
$$
are independent.
Denote by $\mu_{\eta,t}$ the law of random variable
$$
    X_t^{\eta,0} := X_t^{\eta,x}-T_tx = \int_0^tT_{t-s}\sigma\,\d L_s^\eta.
$$
 Construct a sequence $\{\tau_i\}$ of i.i.d.\ random variables which are exponentially distributed with intensity $C_\eta=\nu_\eta(\BB)$, and introduce a further sequence $\{U_i\}$ of i.i.d.\ random variables on $\BB$ with law $\bar{\nu}_\eta=\nu_\eta/C_\eta$. We will assume that the random variables $\{U_i\}$ are independent of the sequence $\{\tau_i\}$. Then, according to \cite[Examples, Section 2]{App1}, $
L_t^\eta=\sum_{i=1}^{N_t}U_i$ for every $t\ge0$, where $N_t:=\sup\{k: \sum_{i=1}^k\tau_i\le t\}$, for $\sum_{i\in \varnothing}:=0$ by convention,  is a Poisson process of intensity $C_\eta$. Therefore, the random variable
\begin{equation}\label{proofs0}
     1_{\{\tau_1\le t\}}\sum_{k=1}^\infty1_{\{N_t=k\}} \Big(T_{t-\tau_1}\sigma U_1+\cdots+T_{t-(\tau_1+\cdots+\tau_k)}\sigma U_k\Big)
\end{equation}
has the probability distribution $\mu_{\eta,t}$.

Let $P_t(x,\cdot)$ and $P_t$ be the
transition kernel and the transition semigroup of the
Ornstein-Uhlenbeck process $X^x_t$. Similarly, we denote
by  $P^\eta_t(x,\cdot)$ and $P^\eta_t$ the transition
kernel and the transition semigroup of
$X_t^{\eta,x}$, and by $Q^\eta_t(x,\cdot)$
and $Q^\eta_t$ the transition kernel and the transition
semigroup of $X_t^x-X_t^{\eta,x}$. By the
independence of the processes $X_t^{\eta,x}$ and
$X_t^x-X_t^{\eta,x}$, we get
\begin{equation}\label{proofs2}\begin{aligned}
    \|P_t(x,\cdot)-P_t(y,\cdot)\|_{var}
    &= \sup_{\|f\|_\infty\le 1}\big|P_tf(x)-P_tf(y)\big|\\
    &=\sup_{\|f\|_\infty\le 1} \big|P_t^\eta Q_t^\eta f(x)-P_t^\eta Q_t^\eta f(y)\big|\\
    &\le \sup_{\|h\|_\infty\le 1} \big|P^\eta_th(x)-P^\eta_th(y)\big|\\
    &=\sup_{\|h\|_\infty\le 1}\Big|\E(h(X_t^{\eta,x}))-\E(h(X_t^{\eta,y}))\Big|.
\end{aligned}\end{equation}

Following the argument leading to \cite[(2.11)]{SW2}, we may write
$$
    \E f\bigl(X_t^{\eta,x}\bigr)
    =\int_\BB f\bigl(T_tx+z\bigr)\,\mu_{\eta,t}(\d z)
    =f\bigl(T_tx\bigr)\,\e^{-C_\eta t}+Hf(x),\ \ f\in\B_b(\BB)
$$
for
$$
Hf(x) =\sum_{k=1}^\infty   \int_{I_{t,k}}  C_\eta^{k+1}\e^{-C_\eta(t_1+\cdots+t_{k+1})}\,\d t_1\cdots \d t_{k+1}\int_{\BB} f\bigl(T_tx+z\bigr)\,\mu_{t_1,\cdots,t_k}(\d z),$$ where \beg{equation*}\beg{split} &I_{t,k}:= \Big\{(t_1,\cdots, t_k,t_{k+1})\in (0,\infty)^{k+1}: \ \sum_{i=1}^k t_i\le t<\sum_{i=1}^{k+1}t_i\Big\},\\
& \mu_{t_1,\cdots,t_{k}} := (\bar \nu_\eta)^k\circ J_{t_1,\cdots, t_k}^{-1},\\
&J_{t_1,\ldots,t_k}(y_1,\ldots,y_k)
    :=T_{t-t_1}\sigma y_1+\cdots+T_{t-(t_1+\cdots+t_k)}\sigma y_k, \ \ y_1,\cdots, y_k\in\BB.\end{split}\end{equation*}
Then, for any $t\ge1$ and $\vv\in (0,1)$,
\begin{equation}\label{proofs3}\begin{aligned}
   &\sup_{\|h\|_\infty\le 1}\Big|\E(h(X_t^{\eta,x}))-\E(h(X_t^{\eta,y}))\Big|\\
    &\le \sup_{\|h\|_\infty\le 1}\bigg|\E\Big(\!\big(h(X_t^{\eta,x})-h(X_t^{\eta,y})\big)1_{\{\tau_1\le \vv\}}\!\Big)\bigg|
   + \sup_{\|h\|_\infty\le 1}\bigg|\E\Big(\!\big(h(X_t^{\eta,x})-h(X_t^{\eta,y})\big)1_{\{\tau_1\ge \vv\}}\!\Big)\bigg|\\
        &\le 2\P( \tau_1\le \vv)+2\e^{-C_\eta t} +\sum_{k=1}^\infty \int_{I_{t,k}\cap\{(0,\infty)^{k+1}:\,t_1\ge\vv\}}C_\eta^{k+1}\e^{-C_\eta(t_1+\cdots+t_{k+1})}\,\d t_1\cdots \d t_{k+1}\\
    &\qquad\qquad\times\sup_{\|h\|_\infty\le 1} \bigg|\int_{\BB} h\big(T_tx+z\big)\,\mu_{t_1,\cdots,t_k}(\d z) - \int_{\BB} h\big(T_ty+z\big)\,\mu_{t_1,\cdots,t_k}(\d z)\biggr|\\
    &=2(1-\e^{-C_\eta\vv})+2\e^{-C_\eta t} +\sum_{k=1}^\infty \int_{I_{t,k}\cap\{(0,\infty)^{k+1}:\,t_1\ge\vv\}} C_\eta^{k+1}\e^{-C_\eta(t_1+\cdots+t_{k+1})}\,\d t_1\cdots \d t_{k+1}\\
    &\qquad\qquad\times\sup_{\|h\|_\infty\le 1} \bigg|\int_{\BB} h\big(T_t(x-y)+z\big)\,\mu_{t_1,\cdots,t_k}(\d z) - \int_{\BB} h(z)\,\mu_{t_1,\cdots,t_k}(\d z)\bigg|\\
    &\le 2C_\eta\vv+ 2\e^{-C_\eta t} \\
    &\quad+\sum_{k=1}^\infty\int_{I_{t,k}\cap\{(0,\infty)^{k+1}:\,t_1\ge\vv\}} C_\eta^{k+1}\e^{-C_\eta(t_1+\cdots+t_{k+1})}\\
    &\qquad\qquad\qquad\qquad\qquad\qquad\quad\times\|\delta_{T_t(x-y)}*\mu_{t_1,\cdots,t_k}-\mu_{t_1,\cdots,t_k}\|_{var}\,\d t_1\cdots \d t_{k+1}.
\end{aligned}\end{equation}

To estimate $\|\delta_{T_t(x-y)}*\mu_{t_1,\cdots,t_k}-\mu_{t_1,\cdots,t_k}\|_{var}$ for any $t_1\ge \vv$ and $ t\ge t_1+\cdots+t_k$, we will use the Mineka and Lindvall-Rogers couplings for random walks as in \cite{SW, SW2}. The remainder of this part is based on steps 4 and 5 in the proof of \cite[Theorem 1.1]{SW2}.
In order to ease notations, we set $\nnu:=\bar{\nu}_\eta$ and $\nnu^{a}:=\delta_a*\bar\nu_\eta$ for any $a\in\BB$.
For any $i\ge 1$, let $(U_i,\Delta U_i)\in \BB \times \BB$ be a pair of random variables with the following distribution
$$
    \Pp\big((U_i,\Delta U_i)\in C\times D\big)
    =
    \begin{cases}
    \qquad \frac 12 (\nnu\wedge\nnu^{-a_i})(C), & \text{if\ \ } D=\{a_i\};\\
    \qquad \frac 12 (\nnu\wedge\nnu^{a_i})(C),  & \text{if\ \ } D= \{-a_i\};\\
    \big(\nnu- \frac 12 (\nnu\wedge\nnu^{-a_i}+\nnu\wedge\nnu^{a_i})\big)(C), & \text{if\ \ } D=\{0\};
    \end{cases}
$$
where $C\in\B$, $a_i=\sigma^{-1}\, T_{t_1+\cdots+t_i}\,(x-y)$ and $D$ is any of the following three sets: $\{-a_i\}$, $\{0\}$ or $\{a_i\}$.
It follows that, cf.\ see \cite[Lemma 3.2]{SW},
\begin{align*}
    \Pp\big(\Delta U_i=-a_i\big)
    =\frac{1}{2}\big(\nnu\wedge\big(\delta_{a_i}*\nnu)\big)(\BB)
    =\frac{1}{2}\big(\nnu\wedge\big(\delta_{-a_i}*\nnu)\big)(\BB)
   =\Pp(\Delta U_i=a_i).
\end{align*}
It is clear that the distribution of $U_i$ is $\nnu$. Let $U_i'=U_i+\Delta U_i$. We claim that the distribution of $U_i'$ is
also $\nnu$. Indeed, for any $C\in\mathscr{B}$,
\begin{align*}
    &\Pp(U_i'\in C)\\
    &=\Pp(U_i-a_i\in C, \Delta U_i=-a_i)
        + \Pp(U_i+a_i\in C, \Delta U_i=a_i)
        +\Pp(U_i \in A, \Delta U_i=0)\\
    &= \frac 12\left(\delta_{-a_i}*(\nnu\wedge\nnu^{a_i})\right)(C)
        +\! \frac12\left(\delta_{a_i}*(\nnu\wedge\nnu^{-a_i})\right)(C)
       \! +\! \left(\!\!\nnu-\!\! \frac 12\,\big(\nnu\wedge\nnu^{-a_i}+\nnu\wedge\nnu^{a_i}\big)\!\!\right)(C)\\
    &=\nnu(C),
\end{align*}
where we have used that
$$
    \delta_{a_i}*(\nnu\wedge\nnu^{-a_i})=\nnu\wedge\nnu^{a_i}\quad\textrm{ and }\quad\delta_{-a_i}*(\nnu\wedge\nnu^{a_i})=\nnu\wedge\nnu^{- a_i}.
$$
Without loss of generality, we can assume that the pairs $(U_i,U_i')$ are independent for all $i\ge1$.  Now we construct the
coupling
$$
    (S_k,S_k')_{k\ge1}
    =\left(\sum_{i=1}^k T_{t-(t_1+\cdots+t_i)}\sigma U_i\big),
        \sum_{i=1}^kT_{t-(t_1+\cdots+t_i)}\sigma U'_i\big)\right)_{k\ge1}
$$
of
$$
    (S_k)_{k\ge1}:=\bigg(\sum_{i=1}^kT_{t-(t_1+\cdots+t_i)}\sigma U_i\bigg)_{k\ge1}.
$$
Since $U'_i-U_i=\Delta U_i$ is either $\pm a_i$ or $0$, we know that
\begin{align*}
(S_k'- S_k)_{k\ge1}=\left(\sum_{i=1}^kT_{t-(t_1+\cdots+t_i)}\sigma(U'_i-U_i)\big)\right)_{k\ge1}=\left(\sum_{i=1}^kT_{t-(t_1+\cdots+t_i)} \sigma\Delta U_i\big)\right)_{k\ge1}
\end{align*}
is a random walk on $\BB$ whose steps are independent and attain the values $-T_t(x-y)$, $0$ and $T_t(x-y)$ with probabilities $\frac 12(1-p_i)$, $p_i$ and $\frac 12(1-p_i)$, respectively; the values of the $p_i$ are given by
\begin{align*}
    p_i
    &= \left(\nnu- \tfrac 12 (\nnu\wedge\nnu^{-a_i}+\nnu\wedge\nnu^{a_i})\right)(\BB)
    = 1-\nnu\wedge\nnu^{-a_i}(\BB).
\end{align*}
Note that
$\mu_{t_1,\cdots,t_k}$ is the law of the random variable
$
    \sum_{i=1}^kT_{t-(t_1+\cdots+t_i)}\sigma U_i.
$
We get
\begin{equation}\label{proofs6}
  \|\delta_{T_t(x-y)}*\mu_{t_1,\cdots,t_k}-\mu_{t_1,\cdots,t_k}\|_{var}
    \le 2\,\Pp(T^S>k),
\end{equation}
where
$$
    T^S=\inf\{i\ge1\::\: S_{i}=S^{\prime}_{i}+T_t(x-y)\}.
$$

From \eqref{th2233} we get that for all $i\ge1$, $t_1\ge \varepsilon$, $t\ge t_1+\cdots+t_k$ and $x, y\in\BB$ with $\|x-y\|_{\BB}\le \varrho$,
\begin{equation}\label{proofcon}\begin{aligned}
    \frac 12(1-p_i)
    &= \frac{1}{2}\big(\nnu\wedge\big(\delta_{-a_i}*\nnu)\big)(\BB)\\
    &\ge \frac{1}{2}\inf_{s\ge \varepsilon, \|z\|_{\BB}\le \varrho}\nnu\wedge (\delta_{\sigma^{-1}T_sz}*\nnu)(\BB)\\
    &=\frac{1}{2C_\eta}\,\gamma(\eta,\varrho,\varepsilon)>0.
\end{aligned}\end{equation}
We will now estimate $\Pp(T^S>k)$. Let $V_i$, $i\geq 1$, be independent symmetric random variables on $\BB$, whose distributions are given by
$$
    \Pp(V_i=z)
    = \begin{cases}
        \frac 12(1-p_i),     &\text{if\ \ } z=-T_t(x-y);\\
        \frac 12(1-p_i),     &\text{if\ \ } z=T_t(x-y);\\
        \qquad p_i,          &\text{if\ \ } z=0.
    \end{cases}
$$
Set $Z_k:=\sum_{i=1}^k V_i$. We have seen earlier that
$$
    T^S=\inf\{k\ge 1\::\: Z_k=T_t(x-y)\}.
$$
For any $k\ge1$, let
$$
    \kappa=\kappa(k):=\#\big\{i\::\: i\le k\textrm{ and }V_i\neq 0\big\}
$$
and set $\tilde{Z}_k :=\sum_{i=1}^k\tilde{V}_i$, where $\tilde{V}_i$ denotes the $i$th $V_j$ such that $V_j\neq 0$. Then, $\tilde{Z}_k$ is a symmetric random walk on $\BB$ with iid steps which are either $-T_t(x-y)$ or $T_t(x-y)$ with probability $1/2$. Define
$$
    T^{\tilde{Z}}:=\inf\{k\ge 1\::\: \tilde{Z}_k=T_t(x-y)\}.
$$
By \eqref{proofcon},
\begin{equation}\label{lll1}\begin{aligned}
    \Pp(T^S>k)
    &=\Pp\left(T^S>k,\; \kappa\ge \frac{1}{2C_\eta}\,\gamma(\eta,\varrho,\varepsilon)k\right)
        +\Pp\left(T^S>k,\, \kappa\le \frac{1}{2C_\eta}\, \gamma(\eta,\varrho,\varepsilon)k\right)\\
    &\le\Pp\left(T^{\tilde{Z}}> \frac{1}{2C_\eta}\,\gamma(\eta,\varrho,\varepsilon)k\right)
        +\Pp\bigg(\kappa\le \frac{1}{2}\sum_{i=1}^k(1-p_i)\bigg)\\
    &\le\Pp\left(T^{\tilde{Z}}>\frac{1}{2C_\eta}\,\gamma(\eta,\varrho,\varepsilon)k\right)
        +\Pp\bigg(\Big|\kappa-\sum_{i=1}^k(1-p_i)\Big|\ge\frac{1}{2}\sum_{i=1}^k(1-p_i)\bigg).
\end{aligned}\end{equation}
Note that
$
    \kappa=\kappa(k)=\sum_{i=1}^k\zeta_i,
$
where $\zeta_i = 1_{\{V_i\neq 0\}}$, ${1\le i\le k}$, are independent random variables with
$\Pp(\zeta_i = 0) = p_i$ and $\Pp (\zeta_i=1)=1-p_i$. Chebyshev's inequality shows that
\begin{equation}\label{lll2}\begin{aligned}
    \Pp\bigg(\Big|\kappa-\sum_{i=1}^k(1-p_i)\Big|\ge\frac{1}{2}\sum_{i=1}^k(1-p_i)\bigg)
    &\le \frac{4 var(\kappa)}{\Big(\sum_{i=1}^k(1-p_i)\Big)^2}\\
    &=\frac{4\sum_{i=1}^kp_i(1-p_i)}{\Big(\sum_{i=1}^k(1-p_i)\Big)^2}\\
    &\le \frac{4(1-C_\eta^{-1}\gamma(\eta,\varrho,\varepsilon))\sum_{i=1}^k(1-p_i)}{\Big(\sum_{i=1}^k(1-p_i)\Big)^2}\\
    &\le\frac{4(1-C_\eta^{-1}\gamma(\eta,\varrho,\varepsilon))}{C_\eta^{-1}\gamma(\eta,\varrho,\varepsilon)k}.
\end{aligned}\end{equation}
For the second and the last inequalities we have used \eqref{proofcon}.
On the other hand, by \cite[Lemma 2.3]{SW2},
\begin{align*}
    \Pp\bigg(T^{\tilde{Z}}>\frac{1}{2C_\eta}\gamma(\eta,\varrho,\varepsilon)k\bigg)
    &=\Pp\bigg(\Big\langle\max_{i\le \big[\frac{\gamma(\eta,\varrho,\varepsilon)k}{2C_\eta}\big]}\tilde{Z}_i, \theta^*\Big\rangle< \| T_t(x-y)\|_{\BB}\bigg)\\
    &=\Pp\bigg(\max_{i\le \big[\frac{\gamma(\eta,\varrho,\varepsilon)k}{2C_\eta}\big]}\|\tilde{Z}_i\|_{\BB}<  \|T_t(x-y)\|_{\BB}\bigg)\\
    &\leq 2\,\Pp\left(0\le \Big\|\tilde{Z}_{\big[\frac{\gamma(\eta,\varrho,\varepsilon)k}{2C_\eta}\big]}\Big\|_{\BB} \leq \|T_t(x-y)\|_{\BB}\right),
\end{align*}where in the first equality $\theta^*$ is an element in the dual space $E^*$ of the Banach space $E$ such that the duality $\langle T_t(x,y), \theta^*\rangle=\|T_t(x-y)\|_{\BB}$, and in the second equality $\|\tilde{Z}_i\|_{\BB}=\langle \tilde{Z}_i, \theta^*\rangle$ for $i\ge1$.
From the construction above, we know that $(\|\tilde{Z}_k\|_{\BB})_{k\ge1}$ is a symmetric random walk on $\R$ with iid steps with values $\pm \|T_t(x-y)\|_{\BB}$. Using the central limit theorem we find for sufficiently large values of $k\geq k_0$ and some constant $C_0=C_0(k_0)\ge 1$
\begin{equation}\label{lll3}\begin{aligned}
    &\Pp\left(T^{\tilde{Z}}>\frac{1}{2C_\eta}\,\gamma(\eta,\varrho,\varepsilon)k\right)\\
    &\le 2\,\Pp\left(0\leq \frac{\Big\|\tilde{Z}_{\big[\frac{\gamma(\eta,\varrho,\varepsilon)k}{2C_\eta}\big]}\Big\|_{\BB}}{\|T_t(x-y)\|_{\BB}\sqrt{\big[\frac{\gamma(\eta,\varrho,\varepsilon)k}{2C_\eta}\big]}} \leq {\left[\frac{\gamma(\eta,\varrho,\varepsilon)k}{2C_\eta}\right]}^{-1/2}\right)\\
    &\leq \frac{C_0}{\sqrt{2\pi}} \int_{0}^{  {\left[\frac{\gamma(\eta,\varrho,\varepsilon)k}{2C_\eta}\right]}^{-1/2}} e^{-u^2/2}\,du\\
    &\leq \frac{C_0\sqrt{C_\eta}}{\sqrt{\pi \gamma(\eta,\varrho,\varepsilon)k}}.
\end{aligned}\end{equation}
Combining \eqref{lll1}, \eqref{lll2} and \eqref{lll3} gives for all $x,y\in\BB$ with $\|x-y\|_{\BB} \le\varrho$, $t\ge (t_1+\cdots+t_k)\vee 1$, $t_1\ge \varepsilon$ and $k\geq k_0$ that
$$
    \Pp\big(T^S>k\big)
    \le \frac{C_0\sqrt{C_\eta}}{\sqrt{\pi\gamma(\eta,\varrho,\varepsilon) k}}+\frac{4(1-C_\eta^{-1}\gamma(\eta,\varrho,\varepsilon))}{C_\eta^{-1}\gamma(\eta,\varrho,\varepsilon)k}.
$$
According to the estimate above and \eqref{proofs6}, we can find an integer $k_0$ and a constant $C_1>0$  such that
$$
  \|\delta_{T_t(x-y)}*\mu_{t_1,\cdots,t_k}-\mu_{t_1,\cdots,t_k}\|_{var}
    \le C_1\bigg(\frac{1}{\sqrt {\gamma(\eta,  \varrho,\vv)k}}+\frac{1}{\gamma(\eta,  \varrho,\vv)k}\bigg),\ \ k\ge k_0,\vv\in (0,1),t\ge 1
$$ holds for all $x,y\in\BB$ with $\|x-y\|_{\BB}\le \varrho$ and  $(t_1,\cdots, t_{k+1})\in I_{t,k}\cap\{(0,\infty)^{k+1}:\,t_1\ge\vv\}.$

Combining this with  \eqref{proofs2} and  \eqref{proofs3}, we obtain that for all $x, y\in\BB$ with $\|x-y\|_{\BB}\le \varrho$, $t\ge1$ and $\vv>0$,
 \begin{equation}\begin{aligned}\label{proofs8}
 &\| P_t(x,\cdot)-P_t(y,\cdot)\|_{var}\\
 &\le  2C_\eta\vv+2\e^{-C_\eta t} +2\sum_{k=1}^{k_0} \int_{I_{t,k}} C_\eta^{k+1}\e^{-C_\eta(t_1+\cdots+t_{k+1})}\,\d t_1\cdots \d t_{k+1}\\
 &\quad
  +\frac{C_1}{\sqrt {\gamma(\eta,  \varrho,\vv)}}\sum_{k=1}^\infty\frac{1}{\sqrt{k}} \int_{I_{t,k}}C_\eta^{k+1}\e^{-C_\eta(t_1+\cdots+t_{k+1})}\,\d t_1\cdots \d t_{k+1}\\
   &\quad +\frac{C_1}{ {\gamma(\eta,  \varrho,\vv)}}\sum_{k=1}^\infty\frac{1}{{k}} \int_{I_{t,k}}C_\eta^{k+1}\e^{-C_\eta(t_1+\cdots+t_{k+1})}\,\d t_1\cdots \d t_{k+1}\\
        &\le 2C_\eta\vv+2\e^{-C_\eta t}\bigg(1+ C_\eta\sum_{k=1}^{k_0} \frac{C_\eta^{k}t^k}{k!}\bigg)\\
    &\quad +\frac{C_1C_\eta}{\sqrt {\gamma(\eta,  \varrho,\vv)}} \sum_{k=1}^\infty\frac{C_\eta^k t^k}{\sqrt{k}\,k!}\e^{-C_\eta t}+\frac{C_1C_\eta}{{\gamma(\eta,  \varrho,\vv)}} \sum_{k=1}^\infty\frac{C_\eta^k t^k}{{k}\,k!}\e^{-C_\eta t}\\
        &\le C_2\bigg(\vv+\e^{-\frac{1}{2}C_\eta t}+ \frac{1}{\sqrt{\gamma(\eta,  \varrho,\vv)t}}+ \frac{1}{{\gamma(\eta,  \varrho,\vv)t}}\bigg)
\end{aligned}\end{equation}holds for some constant $C_2>0$ depending only on $C_\eta$ and $C_1.$
  To finish  the proof, let
\beg{equation*} \beg{split} &\dd_t:= \inf_{\vv>0} \Big(\vv+ \ff 1 {\ss{\gg(\eta,\varrho,\vv)t}}\Big),\\
&\vv_t= \sup\Big\{\vv>0:\ \vv^2\gg(\eta,\varrho, \vv)\le\ff 1 t\Big\},\ \ t\ge 1.\end{split}\end{equation*} Then it is easy to see that $\dd_t,\vv_t\downarrow 0$ as $t\uparrow\infty$ and $\vv_t\ge \ff 1 {\ss{C_\eta t}}.$  Moreover,  since $\gg(\eta,\rr,\vv)$ is increasing    in $\vv$,  for any $\vv\in (0,\vv_t)$,
$$\vv +\ff 1 {\ss{\gg(\eta,\rr,\vv)t}} \ge\lim_{\vv'\uparrow \vv_t} \ff 1 {\ss{\gg(\eta,\rr,\vv')t}}\ge \lim_{\vv'\uparrow \vv_t} \vv'=\vv_t.$$ So, $\dd_t\ge \vv_t$.
Therefore, there exists a constant $C_3>0$ such that
\beg{equation*}\beg{split} \inf_{\vv>0}\bigg(\vv+\e^{-\frac{1}{2}C_\eta t}+ \frac{1}{\sqrt{\gamma(\eta,  \varrho,\vv)t}}+ \frac{1}{{\gamma(\eta,  \varrho,\vv)t}}\bigg)
&\le \dd_t +\e^{-\ff 1 2 C_\eta t}+\dd_t^2 \\
&\le \dd_t+ \e^{-\ff 1 {2\vv_t^2}} +\dd_t^2\\
 & \le \dd_t+ \e^{-\ff 1 {2\dd_t^2}} +\dd_t^2\\
 &\le C_3 \dd_t,\qquad   \ t\ge 1.\end{split}\end{equation*}
 Combining this with (\ref{proofs8})   we complete the proof.\end{proof}

\section{Proof of Theorem \ref{T1.3}} Let $L^1, L^0, \LL^1,\LL^0$ be in Section 3.1. In particular, $L^0$ is a compound Poisson process with jump measure $\nu_0$. Then $L^0$ can be formulated as
$$L^0_t=\sum_{i=0}^{N_t} \xi_i, \quad t>0,$$
where $N_t:= \#\{s\in [0,t]:\ \DD L^0_s\ne 0\}$, $\xi_i=\DD L^0_{\tau_i}$ for $\tau_i$ the $i$-th jump time of $L^0$. It is well-known that $N$, $\{\xi_i\}$ are independent, $N$ is the Poisson process with parameter $\ll_0$, and $\{\xi_i\}$ have common distribution $\ff 1 {\ll_0}\nu_0.$
To derive exponential convergence of $P_t$ in the total variational norm, we make use of the decomposition
\begin{equation}\label{D} \begin{split} P_t f(x)&= \E \big(1_{\{N_t=0\}}f(X_t^x)\big) +P_t^1f(x),\\
 P_t^1f(x)&= \E\big(1_{\{N_t\ge 1\}} f(X_t^x)\big),\quad f\in \B_b(\BB), t\ge 0, x\in \BB.\end{split}\end{equation}
Since when $t\to\infty,  \E \big(1_{\{N_t=0\}}f(X_t^x)\big)$ decays exponentially fast, it suffices to prove the exponential convergence of $P_t^1.$ To this end, we first consider the gradient estimate of $P_t^1.$

\beg{prp}\label{PP2} Assume {\bf (A)} and suppose that  $(\ref{Z1})$   and $(\ref{Z2})$ hold. Let
$$\GG_t:= \ff 1 {1-\e^{-\ll_0 t}} \int_0^t\e^{-\ll_0 r}\Big(\sup_{\|z\|_\BB\le 1}\sup_{s\ge r}\|\si^{-1}T_sz\|_\H\Big)\,\d r <\infty,\ \ t>0.$$ Then  
$$\|\nn P_t^1f\|_\infty\le   c \GG_t \|f\|_\infty,\ \ t>0, f\in \B_b(\BB).$$\end{prp}

\beg{proof} The proof is modified from that of \cite[Theorem 3.1]{W10b}. It suffices to prove  
 \beq\label{GGG1} |\nn_{z_0}P_t^1f(x)|\le c \GG_t\|f\|_\infty,\ \ z_0, x\in\BB, \|z_0\|_\B\le 1, f\in \B_b(\BB).\end{equation}  To prove this inequality, we first establish a formula for $P_t^1$ as in
  \cite[(3.8)]{W10b} where $\si=I$ is considered. Recall that for a random variable $(\xi,\tau)$ on $\BB\times [0,t)$ such that the   distribution of $(L^0,\xi,\tau)$ is
 $$g(w,z,s) \LL^0(\d w) \nu_0(\d z)\d s,$$ \cite[Corollary 2.3]{W10a} implies that
 \beq\label{ABC1} \E\big\{(F1_{\{U>0\}})(L^0)\big\} = \E \ff{F1_{\{U>0\}}}{U}(L^0+ \xi 1_{[\tau,t]})\end{equation} holds for positive measurable function $F$  on $W_t$, where
 \beq\label{ABC2} U(w):= \sum_{s\in [0,t): \DD w_s\ne 0} g(w-\DD w_s 1_{[s,t]}, \DD w_s, s).\end{equation} Now, let $(\xi,\tau)$ be independent of $(L^1,L^0)$ with distribution
 $\ff 1 {t\ll_0} 1_{[0,t]}(s) \nu_0(\d z) \d s.$ We have $g(w,z,s)=\ff 1 {t\ll_0} 1_{[0,t]}(s)$, so that
 $$U(L^0+\xi 1_{[\tau,t]}) =\ff{N_t +1}{\ll_0 t} >0.$$ Therefore, letting $Y_t= \int_0^{t}T_{t-s} \si \d L^1_s$ which is independent of $(L^0, \xi,\tau)$, combining (\ref{S}) with (\ref{ABC1}) we obtain
 \beq\label{ABC3} \beg{split}P_t^1f(x+\vv z_0) &=  \E \bigg\{f\bigg(Y_t + T_t(x+\vv z_0) +\int_0^t T_{t-s} \si \d L_s^0\bigg) 1_{\{N_t\ge 1\}}\bigg\}\\
 & =\ll_0 t\, \E\bigg\{\ff{ f\big( Y_t + T_tx +\int_0^t T_{t-s} \si \d \big\{L^0+(\xi+\vv \si^{-1}T_\tau z_0)1_{[\tau,t]}\big\}_s\big)}{N_t+1}\bigg\}.\end{split}\end{equation}
 On the other hand,    it is easy to see from (\ref{1.1}) that the distribution of $(L^0,  \xi+\vv \si^{-1}T_\tau z_0,\tau)$ is
$$\ff{\vp_{\vv \si^{-1}T_sz_0}(z)\rr_0(z-\vv\si^{-1}T_sz_0)1_{[0,t]}}{t\ll_0\rr_0(z)}\,\LL^0(\d w)\, \nu_0(\d z)\, \d s =: g(w,z,s) \,\LL^0(\d w)\, \nu_0(\d z)\, \d s.$$ According to (\ref{ABC2}) we have $\{U(L^0)>0\}= \{N_t\ge 1\}$ and
$$U(L^0) = \ff 1 {\ll_0 t} \sum_{i=1}^{N_t} \ff{\varphi_{\vv \si^{-1}T_{\tau_i} z_0}(\xi_i) \rr_0(\xi_i-\vv\si^{-1}T_{\tau_i} z_0)}{\rr_0(\xi_i)}.$$ So, applying (\ref{ABC1}) to $FU$ in place of $F$, we obtain
$$\E\big\{(FU)(L^0)1_{\{N_t\ge 1\}}\big\} = \E\big\{(F1_{\{U>0\}})(L^0 +(\xi+\vv \si^{-1}T_\tau z_0)1_{[\tau,t]}\big\}.$$ Taking
$n_t(w)= \sum_{s\le t}1_{\{\nn w_s\ne 0\}}$ such that $N_t= n_t(L^0)$,  and letting
$$F(w)= \ff{f\big(Y_t+T_t x+ \int_{\BB\times [0,t]}T_{t-s}\si z \, w(\d z,\d s)\big)}{n_t(w)} 1_{\{n_t(w)\ge 1\}},$$ we arrive at
\beg{equation*}\beg{split} &\ff 1 {\ll_0 t} \E \bigg\{f\bigg(Y_t +T_t x +\int_0^t  T_{t-s}\si \d L_s^0\bigg) \ff{1_{\{N_t\ge 1\}}}{N_t} \sum_{i=1}^{N_t} \ff{\varphi_{\vv\si^{-1} T_{\tau_i} z_0}(\xi_i) \rr_0(\xi_i-\vv\si^{-1}T_{\tau_i} z_0)}{\rr_0(\xi_i)}\bigg\}\\
&= \E\bigg\{\ff{ f\big(( Y_t + T_tx +\int_0^t T_{t-s} \si \d \big\{L^0+(\xi+\vv \si^{-1}T_\tau z_0)1_{[\tau,t]}\big\}_s\big)}{N_t+1}\bigg\}.\end{split}\end{equation*} Combining this with
(\ref{ABC3}) and noting that $X_t^x=Y_t +T_t x +\int_0^t  T_{t-s} \si \d L_s^0$ due to (\ref{S}),  we obtain
$$P_t^1f(x+\vv z_0)= \E \bigg\{f (X_t^x) \ff{1_{\{N_t\ge 1\}}}{N_t} \sum_{i=1}^{N_t} \ff{\varphi_{\vv\si^{-1} T_{\tau_i} z_0}(\xi_i) \rr_0(\xi_i-\vv\si^{-1}T_{\tau_i} z_0)}{\rr_0(\xi_i)}\bigg\}.$$
Therefore,
\beg{equation}\label{GGG2}\beg{split} &\ff{|P_t^1f(x+\vv z_0)-P_t^1 f(x)|}{\vv}\\
 &=\E\bigg\{f(X_t^x)1_{\{N_t\ge 1\}} \ff 1 {N_t}\sum_{i=1}^{N_t} \ff{\vp_{\vv \si^{-1}T_{\tau_i}z_0}(\xi_i)\rr_0(\xi_i-\vv\si^{-1}T_{\tau_i}z_0)-\rr_0(\xi_i)}{\vv\rr_0(\xi_i)}\bigg\}\\
 &\le \ff{\|f\|_\infty}{\varepsilon\ll_0} \E\bigg\{1_{\{N_t\ge 1\}} \ff 1 {N_t}\sum_{i=1}^{N_t}\int_{\BB} \big|
  \vp_{\vv \si^{-1}T_{\tau_i}z_0}(z)\rr_0(z-\vv\si^{-1}T_{\tau_i}z_0)-\rho_0(z)\big|\,\mu(\d z)\bigg\}\end{split}\end{equation} holds for any $\vv>0.$ Moreover,  it follows from
  (\ref{1.1}) and (\ref{Z1}) that
\beg{equation*}\beg{split}   &\int_{\BB} \big|
  \vp_{\vv \si^{-1}T_{\tau_i}z_0}(z)\rr_0(z-\vv\si^{-1}T_{\tau_i}z_0)-\rho_0(z)\big|\,\mu(\d z)\\
  &\le \int_{\BB} \big|
  \rr_0(z-\vv\si^{-1}T_{\tau_i}z_0)-\rr_0(z)\big|\vp_{\vv \si^{-1}T_{\tau_i}z_0}(z)\,\mu(\d z)+ \int_{\BB}\rr_0(z)\big|\vp_{\vv \si^{-1}T_{\tau_i}z_0}(z)-1\big|\,\mu(\d z)\\
  &= \int_{\BB} \big|\rr_0(z)-
  \rr_0(z+\vv\si^{-1}T_{\tau_i}z_0)\big|\,\mu(\d z)+ \int_{\BB}\rr_0(z)\big|\vp_{\vv \si^{-1}T_{\tau_i}z_0}(z)-1\big|\,\mu(\d z)\\
  &\le c\|\vv \si^{-1}T_{\tau_i}z_0\|_\H\le c\vv \sup_{\|z\|_\B\le 1}\sup_{s\ge \tau_1}\|\si^{-1}\tau_sz\|_\H
    \end{split}\end{equation*} holds for small enough $\vv>0$ and some constant $c>0$. Combining this with (\ref{GGG2}) and  using the fact that the conditional distribution of $\tau_1$ under $N_t\ge 1$ is $\ff {\ll_0\e^{-\ll_0s}1_{[0,t]}}{1-\e^{-\ll_0t}} \, \d s$, we obtain
$$\ff{|P_t^1f(x+\vv z_0)-P_t^1 f(x)|}{\vv}\le   c\GG_t\|f\|_\infty
  $$ for small enough $\vv>0.$ Then (\ref{GGG1}) follows by letting $\vv\to 0$.
  \end{proof}

\beg{proof}[Proof of Theorem \ref{T1.3}] By (\ref{D}) and Proposition \ref{PP2} we have
\beq\label{DD2}\beg{split}  |P_t f(x)-P_t f(y)|&\le 2\|f\|_\infty \e^{-\ll_0 t} + |P_t^1f(x)-P_t^1f(y)|\\
&\le 2\|f\|_\infty\e^{-\ll_0t} + c \GG_t \|f\|_\infty\|x-y\|_\BB.\end{split}\end{equation} Since $\|T_s\|_\BB\le c\e^{-\ll s}$, it follows from (\ref{S}) that
$$\|X_t^x-X_t^y\|_\BB\le c\e^{-\ll t}\|x-y\|_\BB,\ \ x,y\in \BB, t\ge 0.$$ Combining this with (\ref{DD2}) and using the Markov property, we arrive at
\beg{equation*}\beg{split} & |P_t f(x)-P_t f(y)|\\
&\le \E |P_s f(X_{t-s}^x)-P_sf(X_{t-s}^y)|\\
&\le 2\|f\|_\infty\e^{-\ll_0s} + c \GG_s \|f\|_\infty\|X_{t-s}^x-X_{t-s}^y\|_\BB \\
&\le c_1\|f\|_\infty(1+\|x-y\|_\BB) \big\{\e^{-\ll_0s} \lor (\GG_s \e^{-\ll (t-s)})\big\},\ \ \ s\in (0,t) \end{split}\end{equation*} for some constant $c_1>0$.
Taking $s= \ff{\ll t}{\ll_0+\ll}$ and using (\ref{Z2}), we prove the desired estimate for $t\ge \ff{\ll_0+\ll}\ll.$ The proof is then finished since the inequality trivially holds for some constant $C>0$ for $t\le  \ff{\ll_0+\ll}\ll.$ \end{proof}

\section{Two specific models}

In the following two examples   we   take the reference measure  $\mu$ to be the Wiener measure on the Brownian path space,   and the Gaussian measure on a separable Hilbert space, respectively.

\subsection{Wiener measure}

Let    $\BB=\{x\in C([0,1];\R^d):\ x_0=0\}$,  and let $\mu$ be  the Wiener measure on $\BB$, i.e. the distribution of the $d$-dimensional Brownian motion $(B_s)_{s\in [0,1]}$. Let  $\H=\{h\in \BB: \int_0^1|\dot h_s|^2\,\d s<\infty\}$ be the Cameron-Martin space. Then $(\BB, \H,\mu)$ is known as the Wiener space (see \cite[Chapter 1]{Ma}).

By the Cameron-Martin theorem (or the Girsanov theorem),  (\ref{1.1}) holds for
\beq\label{1.2}\vp_h(z)= \exp\bigg[\int_0^1\<\dot h_s, \d z_s\>-\ff 1 2 \int_0^1| \dot h_s|^2\,\d s\bigg],\end{equation} where $ \int_0^1\<\dot h_s, \d z_s\>$ is the It\^o stochastic integral
w.r.t. $(z_s)_{s\in [0,1]}$,  which is the Brownian motion under $\mu$.

 Let $(\BB,\H,\mu)$ be the Wiener space specified above, and let $\DD$ be the Laplace operator on $[0,1]$ with Dirichlet boundary
condition at $0$, and with either Dirichlet or Neumann boundary condition at $1$. We call $\DD$ the Dirichlet or the Dirichlet-Neumann Laplacian on $[0,1].$ Let $P_t$ be the semigroup associated with the SDE
$$\d X_t= \DD X_t\d t+\d L_t,$$ where $L_t$ is a L\'evy process on $\BB$ with L\'evy measure $\nu$, and let $\nu_0$ satisfy $(\ref{C})$.

\beg{prp}\label{P4.1}  $(1)$ If $\nu_0(\BB)=\infty$, then $P_t$ is strong Feller for any $t>0.$

$(2)$ If $\nu_0(\BB)<\infty$ and there exist
$z_0\in\BB$ and $r_0>0$ such that $\inf_{B(z_0,r_0)}\rr_0>0$,   then  $$\|P_t(x,\cdot)-P_t(y,\cdot)\|_{var}\le \ff{C(1+\|x-y\|_\BB)}{\log (1+t)},\ \ \ t>0, x,y\in\BB$$ holds for some constant $C>0.$

$(3)$ If $\rr_0$ is Lipschitz continuous and $\ll_0:=\nu_0(\BB)\in (0,\infty)$, then $(\ref{Z3})$ holds for $\ll>0$ the first eigenvalue of $\DD$ on $[0,1]$ under the underlying boundary  condition. \end{prp}

\beg{proof} By the gradient estimate for the (Dirichlet or Dirichlet-Neumann) heat semigroup $T_s$ on the interval $[0,1]$ (see e.g.\ \cite[Section 2.4]{WangBook} and the references therein), there exists a constant
 $c_1>0$ such that
 $$\Big|\ff{\d}{\d r}(T_s y)(r) \Big|\le \ff{c_1\|y\|_\BB}{\ss s},\ \ \ s>0, r\in [0,1], y\in \BB.$$ Then
 \beq\label{GFY} \|T_s y\|_\H\le \ff{c_1\|y\|_\BB}{\ss s},\ \ \ s>0, y\in\BB.\end{equation}  Therefore, {\bf (A)} holds for $\si= I$.  By (\ref{1.2}) and (\ref{GFY}),  for $\mu$-a.e. Bownian path $z$,  we have
 $$\sup_{\|y\|_\BB\le 1} \vp_{T_s y}(z+T_s y)\le  \sup_{\|h\|_\H\le c_1s^{-1/2}}   \e^{ \int_0^1\<\dot h_u, \d  z_u \> -\ff {1}{2}\int_0^1 |\dot h_u|^2\,\d u }
  <\infty.$$ Thus, (\ref{1.5'}) holds and  Theorem \ref{T1.1} implies the first assertion.

 Next, noting that
 $$\int_\BB \e^{ 2\int_0^1\<\dot h_r, \d z_r\>-2\int_0^1|\dot h_r|^2\d r}\,\mu(\d z)=1,\ \ \ h\in\H,$$ we obtain
 \beg{equation*}\beg{split} \int_\BB \vp_{T_s y}(z)^2\,\mu(\d z) &= \int_\BB \e^{2\<\ff{\d }{d r} (T_s y)_r, \d z_r\> -\int_0^1 |\ff{\d}{\d r} (T_s y)_r|^2\d r}\,
 \mu(\d z)\\
 &= \e^{ \int_0^1 |\ff{\d}{\d r} (T_s y)_r|^2\,\d r}\le \e^{ c_1^2\|y\|_\BB^2/s},\ \ \ s>0, y\in \BB.\end{split}\end{equation*} This implies that   $\dd_2(\vv)\le c_2\e^{ c_1^2/\vv}$  for some constant $c_2>0$ and all $\vv\in (0,1)$. Thus, the second assertion follows from Theorem \ref{T1.2}.

 Finally, to prove (3) it suffices to verify (\ref{Z1}) and (\ref{Z2}) in Theorem \ref{T1.3}. Since (\ref{Z2}) follows from (\ref{GFY}), we only have to prove (\ref{Z1}). By the Lipschitz continuity of $\rr_0$, there exist    constants $c_3, c_4>0$ such that
 $$|\rr_0(z)-\rr_0(z+h)|\le c_3\|h\|_\BB\le c_3\|h\|_\H$$ and
 $$\mu(\rr_0^2)\le c_4\E\sup_{s\in [0,1]}\big(1+| B_s|^2\big)<\infty.$$ Moreover,
 $$\mu\big((\vp_h-1)^2\big)=\mu(\vp_h^2)-1 =\E\e^{2\int_0^1\<\dot h_s,\d B_s\>-\|h\|_\H^2}-1=\e^{\|h\|_\H^2}-1\le \e \|h\|_\H^2$$ holds for $\|h\|_\H\le 1.$ Then (\ref{Z1}) holds for some constant $c>0.$
\end{proof}

\subsection{Gaussian measure} Let $\BB$ be a separable Hilbert space with ONB $\{e_k\}_{k\ge 1}$, and $\mu$ the Gaussian measure with trace class covariance operator $Q$ such that $Q e_k=q_k^{-1} e_k, q_k>0$ and $\sum_{k=1}^\infty q_k^{-1}<\infty$ (see \cite[Chapter 2]{DZ}). Coordinating $z\in \BB$ by $(z_k=\<z,e_k\>)_{k\ge 1}$, we have
\beq\label{EE0} \mu(\d z)=\prod_{k=1}^\infty \mu_k(\d z_k),\ \ \mu_k(\d z_k)=\ff{\ss{q_k}}{\ss{2\pi}}\exp\Big[-\ff{q_kz_k^2}{2}\Big]\d z_k,\ k\ge 1.\end{equation}  Next, let $A$ be the self-adjoint operator on $\BB$ with $A e_k=-\ll_k e_k$, $\ll_k\ge 0$ for $k\ge 1$ and
\beq\label{EE} \bb(\vv):=\sup_{k\ge 1} \e^{-\vv \ll_k}q_k^2<\infty,\ \ \vv>0.\end{equation} Let $L_t$ be a L\'evy process on $\BB$ with L\'evy measure
$\nu$ satisfying (\ref{C}). Let $P_t$ be the Markov semigroup associated to the linear SDE
$$\d X_t= AX_t \d t + \d L_t.$$

\beg{prp} $(1)$ If $\nu_0(\BB)=\infty$, then $P_t$ is strong Feller for $t>0$.

$(2)$  If $\nu_0(\BB)<\infty$ and  there exist
$z_0\in\BB$ and $r_0>0$ such that $c_0:=\inf_{B(z_0,r_0)}\rr_0>0$,  then $(\ref{Coupling1})$ holds for
$$ \dd_2(\vv)= \ff 1 {c_0}\bigg[1+\exp\Big(\sup_{k\ge 1} q_k \e^{-2\vv \ll_k}\Big)\bigg]<\infty,\ \ \vv>0.$$ If, in particular, $q_k\approx k^{(1+\dd)}$ and $\ll_k\approx k^{2/d}$ for some constants $\dd, d>0$ and large $k$,
then there exists a constant $C>0$ such that
$$\|P_t(x,\cdot)-P_t(y,\cdot)\|_{var}\le \ff{C(1+\|x-y\|_\BB)}{t^{2/(4+d(1+\dd))}},\ \ \ t>0, x,y\in\BB.$$

$(3)$ Suppose $\ll:=\inf_{k\ge 1}\ll_k>0$. Then $(\ref{Z3})$ holds for any Lipschitz continuous $\rr_0$ with $\ll_0:=\nu_0(\BB)\in (0,\infty).$ \end{prp}

\beg{proof} Let $\H=\{h\in \BB: \sum_{k=1}^\infty h_k^2q_k^2<\infty\}.$ By (\ref{EE}) it is easy to check that {\bf (A)} holds for $\si=I$.
Moreover, by (\ref{EE0}), for any $h\in \H$ we have $\mu(\d z-h)=\varphi_h(z)\mu(\d z)$ for
\beq\label{EE1} \varphi_h(z)= \exp\bigg[ \sum_{k=1}^\infty \Big(q_k h_k z_k-\ff 1 2 q_k h_k^2\Big)\bigg],\ \ h_k=\<h,e_k\>, k\ge 1.\end{equation}
Then it is easy to see from (\ref{EE}) that there exists a constant $c_1>0$ such that
$$\sup_{\|y\|\le 1} \varphi_{T_s y}(z+T_s y)\le \exp\big[\|z\|_\BB^2 + c_1\bb(2s)\big]<\infty,\ \   z\in \BB.$$ Therefore, the first assertion follows from Theorem \ref{T1.1}.

Next, since $\si=I$, it follows from (\ref{EE1}) that
\beg{equation}\label{WWY2}\beg{split} \int_\BB \varphi_{\si^{-1}T_sy}(z)^2\mu(\d z) &= \prod_{k=1}^\infty \ff{\ss{q_k}}{\ss{2\pi}}\int_\R \exp\Big[q_k(T_sy)_k^2- \ff 12 q_k(z_k-2(T_sy)_k)^2\Big]\d z_k\\
&= \exp\bigg[\sum_{k=1}^\infty q_k (T_s y)_k^2\bigg]= \exp\bigg[\sum_{k=1}^\infty q_k \e^{-2\ll_k s} y_k^2\bigg]\\
&\le \exp\Big[\|y\|_\BB^2 \sup_{k\ge 1} q_k \e^{-2\ll_k s}\Big].\end{split}\end{equation} Thus, due to (\ref{EE}), Theorem \ref{T1.2} holds for the claimed
 $\dd_2(\vv)$.

Finally,  under \eqref{EE}, we have $\sup_{k\ge 1}q_k \e^{-s\ll_k}<\infty$ for $s>0$, which implies (\ref{Z2}). Moreover, replacing $T_s y$ by $h$ in (\ref{WWY2})   we obtain
$$\mu(\vp_h^2)-1 =\exp\Big[\sum_{k\ge 1}  q_k h_k^2\Big] -1\le \e^{c_2\|h\|_\H^2}-1\le \e^{c_2} \|h\|_\H^2,\ \ \|h\|_\H\le 1$$ for some constant $c_2>0.$ Then as in the proof of Proposition \ref{P4.1} we prove (\ref{Z1}).
\end{proof}

\paragraph{Acknowledgement.} The authors would like to thank the referee as well as an anonymous expert for very useful comments  and corrections.

\beg{thebibliography}{99}
\bibitem{App0} D. Applebaum, \emph{Martingale-valued measures, Ornstein-Uhlenbeck processes with jumps and operator self-decomposability in Hilbert space,} S\'{e}minaire de Probabilit\'{e}s 39, 171--196. Lect. Notes in Math. 1874, Springer, Berlin, 2006.

\bibitem{App1} D. Applebaum, \emph{L\'{e}vy processes and stochastic integrals in Banach spaces,} Probab. Math. Stat. 27(2007), 75--88.

\bibitem{BSW} B. B\"ottcher, R. L. Schilling, J. Wang, \emph{Constructions of coupling processes for L\'evy processes,}   Stoch. Proc. Appl.
121(2011), 1201--1216.

 \bibitem{CM} A. Chojnowska-Michalik, \emph{On processes of Ornstein-Uhlenbeck type in Hilbert space,} Stochastics 21(1987), 251--286.

 \bibitem{CG} M. Cranston, A. Greven, \emph{Coupling and harmonic functions in the case of continuous time Markov processes,} Stoch. Proc. Appl. 60(1995), 261--286.

\bibitem{DZ}  G. Da Prato, J. Zabczyk, \emph{Stochastic Equations in Infinite Dimensions,} Cambridge University Press,
Cambridge, 1992.

\bibitem{BN} B. Goldys, J. M. A. M. van Neerven, \emph{Transition semigroups of Banach space-valued Ornstein-Uhlenbeck processes,}
 Acta Appl. Math.  76(2003), 283--330.

 \bibitem{KS} V. Knopova, R.L. Schilling, \emph{A note on the existence of transition probability densities for L\'evy processes,}
  Forum Math. 25(2013), 125--149.

 \bibitem{KS2} V. Knopova, R.L. Schilling, \emph{Transition  density estimates for a class of L\'evy and   L\'evy-type processes,}
J. Theor. Probab. 25(2012), 144--170.

\bibitem{Lin} T. Lindvall, \emph{Lectures on the Coupling Methods,}  Wiley, New York, 1992.

\bibitem{Ma}  P. Malliavin, \emph{Stochastic Analysis,} Springer,
Berlin, 1997.

\bibitem{PZ1}  S. Peszat, J. Zabczyk, \emph{Stochastic Partial Equations with L\'{e}vy Noise,} Cambridge University Press,
Cambridge, 2007.

 \bibitem{X2} E. Priola, A. Shirikyan, L. Xu, J. Zabczyk, \emph{Exponential ergodicity and regularity for equations with L\'evy noise,} Stoch. Proc. Appl. 122(2012), 106--133.

\bibitem{PZ} E. Priola, J. Zabczyk, \emph{Densities for Ornstein-Uhlenbeck processes with jumps,} Bull. Lond. Math. Soc. 41(2009), 41--50.

\bibitem{X1} E. Priola, J. Zabczyk, \emph{Structural properties of semilinear SPDEs driven by cylindrical stable processes,} Probab. Theory Relat. Fields
149(2011), 97--137.

\bibitem{RW03} M. R\"ockner, F.-Y. Wang, \emph{Harnack and functional inequalities for generalized Mehler semigroups,} J. Funct. Anal. 203(2003), 237--261.

\bibitem{SW}  R. L. Schilling, J. Wang, \emph{On the coupling property of L\'evy processes,} Inst. Henri Poinc. Probab. Stat. 47(2011), 1147--1159.

\bibitem{SW2}  R. L. Schilling, J. Wang, \emph{On the coupling property and the Liouville theorem for Ornstein-Uhlenbeck processes,} J. Evol. Equat. 12(2012), 119--140.

\bibitem{SSW} R. L. Schilling,  P. Sztonyk, J. Wang, \emph{Coupling property and gradient estimates of L\'evy processes via the symbol,}   Bernoulli. 18(2012), 1128--1149.

\bibitem{T} A. Takeuchi, \emph{The Bismut-Elworthy-Li type formulae for stochastic differential equations with jumps, } J. Theor. Probab. 23(2010), 576--604.

\bibitem{W10b} F.-Y. Wang, \emph{Gradient estimate for Ornstein-Uhlenbeck jump processes,}  Stoch. Proc. Appl. 121(2011), 466--478.

\bibitem{W10a} F.-Y. Wang, \emph{Coupling for  Ornstein-Uhlenbeck   processes with jumps,}  Bernoulli 17(2011), 1136--1158.

\bibitem{W11} F.-Y. Wang, \emph{Derivative formula and Harnack inequality  for jump processes,} preprint,  arXiv: 1104.5531v4.

\bibitem{WangBook}
F.-Y. Wang, \emph{Functional Inequalities, Markov Processes and
Spectral Theory}, Science Press, Beijing 2005.

\bibitem{WJ} J. Wang, \emph{Linear evolution equations with cylindrical L\'{e}vy noise: gradient estimates and exponential ergodicity,} preprint.

\bibitem{Z} J. Zabczyk, \emph{Linear stochastic systems in Hilbert spaces; spectral properties and limit behaviour,}
Banach Center Pub. 41(1985), 591--609.

\bibitem{Zhang} X. Zhang, \emph{Derivative formula and gradient estimate for SDEs driven by $\aa$-stable processes,} preprint, arXiv:1204.2630.
  \end{thebibliography}
\end{document}